\newtheorem{Def}{Definition}[section]
\newtheorem{Prop}[Def]{Proposition}
\newtheorem{Lem}[Def]{Lemma}
\newtheorem{Thm}[Def]{Theorem}
\theoremstyle{definition}
\newtheorem{Rem}[Def]{Remark}
\newcommand{\e}{\mathbb{E}}
\newcommand{\bR}{\mathbb{R}}
\newcommand{\mf}{\mathcal{F}}
\newcommand{\pr}{\mathbb{P}}
\newcommand{\bE}{\mathbb{E}}
\newcommand{\br}{\mathbb{R}} 
\newcommand{\sm}{\sigma}
\newcommand{\Om}{\Omega}
\newcommand{\sd}{\underline{s}}
\newcommand{\Xm}{\widehat{X}}
\begin{document}
\title{A Multi-level Monte Carlo simulation for invariant distribution of  Markovian switching L\'evy-driven SDEs with super-linearly growth coefficients}

\author{ Hoang-Viet Nguyen\footnote{Hanoi National University of Education. Email: vietnh@hnue.edu.vn} \quad Trung-Thuy Kieu\footnote{Hanoi National University of Education. Email: thuykt@hnue.edu.vn} \quad  Duc-Trong Luong\footnote{Corresponding author. Hanoi National University of Education. Email: trongld@hnue.edu.vn}\\  Hoang-Long Ngo\footnote{Hanoi National University of Education. Email: ngolong@hnue.edu.vn}  \quad Ngoc Khue Tran\footnote{Faculty of Mathematics and Informatics, Hanoi University of Science and Technology, 1 Dai Co Viet, Hai Ba Trung, Hanoi, Vietnam. Email: khue.tranngoc@hust.edu.vn}}

\maketitle

\begin{abstract}

 This paper concerns the numerical approximation for the invariant distribution of Markovian switching L\'evy-driven stochastic differential equations.
 By combining the tamed-adaptive Euler-Maruyama scheme with the Multi-level Monte Carlo method, we propose an approximation scheme that can be applied to stochastic differential equations with super-linear growth drift and diffusion coefficients.  
 \end{abstract} 

\textbf{Keywords}:  Invariant measure  $\cdot$   Markovian switching $\cdot$ Multi-level Monte-Carlo $\cdot$ Super-linearly  growth coefficient   $\cdot$ Tamed-adaptive Euler-Maruyama scheme 

\textbf{Mathematics Subject Classification (2010)}:  60H10  $\cdot$ 65C30 
\section{Introduction}

On a complete filtered probability space $(\Om, \mf, (\mf_t)_{t\geq 0}, \pr)$, we consider the $d$-dimensional process $X=(X_t)_{t \geq 0}=(X_{1,t},X_{2,t},\ldots,X_{d,t})_{t\geq 0}$ solution to the following L\'evy-driven stochastic differential equation with Markovian switching  
\begin{equation} \label{eqn1}
X_t=x_0+\int_0^t b(\theta_s,X_s)ds+\int_0^t \sigma(\theta_s,X_s)dW_s + \int_0^t \gamma \left(\theta_{s-},X_{s-}\right)dZ_s, 
\end{equation}
where $x_0\in \mathbb{R}^d$; $W=(W_t)_{t\geq 0}=(W_{1,t},W_{2,t},\ldots,W_{d,t})_{t\geq 0}$ is a $d$-dimensional standard Brownian motion; $Z=(Z_t)_{t\geq 0}=(Z_{1,t},Z_{2,t},\ldots,Z_{d,t})_{t\geq 0}$ is a $d$-dimensional centered pure jump L\'evy process whose L\'evy measure $\nu$ satisfies $\int_{\mathbb{R}^d}(1\wedge \vert z\vert^2)\nu(dz)<+\infty$; 
 $\theta = (\theta_t)_{t \ge 0}$ is a Markov chain taking values in a finite state space $S = \{1,\ldots, N\}$. We suppose that  the generator $\Theta = (\vartheta_{ij})_{N\times N}$ of $\theta$ is defined by
$$
\mathbb{P}(\theta_{t+u} = j|\theta_t=i) = \begin{cases}
\vartheta_{ij}u + o(u) & \textnormal{if }i\neq j \\
1+\vartheta_{ii}u + o(u) & \textnormal{if }i=j
\end{cases},
$$
where $t, u > 0$, $\vartheta_{ij} \ge 0$ is the switching rate from state $i$ to state $j \neq i$, and $\vartheta_{ii} = - \sum_{j\in S, j \not = i}\vartheta_{ij}.$ 
Let $N(dt, dz)=(N_1(dt, dz), \ldots, N_d(dt,dz))$ and $\widetilde{N}(dt,dz)$ be the Poisson random measure and compensated Poisson random measure associated to $Z$, respectively. We suppose that $N_1, \ldots, N_d$ are independent and let $\nu_i(dz_i)dt$ denote the intensity measure of $N_i$.  
The  L\'evy-It\^o decomposition of $Z$ takes the form 
$$
Z_{i,t}=\int_{0}^{t}\int_{\mathbb{R}_0}z_i(N_i(ds,dz_i)-\nu_i(dz_i)ds), \quad t \geq 0. 
$$
Moreover, we assume that three processes $W$, $Z$, and $\theta$ are mutually independent and adapted to the filtration $(\mf_t)_{t\geq 0}$; for each $\theta \in S$, $b(\theta,.)$, $\sigma(\theta, .)$ and $\gamma(\theta, .)$ are measurable functions. 

Stochastic differential equations (SDEs) have been used to model many random processes in biology, engineering, finance, and physics (see \cite{KP, MT}). In particular, SDEs driven by jumps with switching coefficients can be used to capture sudden changes in the dynamic of these processes (see \cite{Jobert, Smith, MaoYuanfirst, Cont, Oksendal, NOP}).
In these applications, one often wants to understand the long-time behavior of the systems by computing their invariant distribution. The partial differential equations approach may help to find such distributions for low dimensional SDEs with regular coefficients. One should use some simulation method for high dimensional SDEs with less regular coefficients (see \cite{LMS, MSH, Talay, TT}). More precisely, let $\pi$ be the invariant measure, then for some  function $\varphi$, 
$$\pi(\varphi):= \int \varphi(x) \pi (dx) \approx \mathbb{E}[\varphi(X_T)] \text{ for } T \text{ large enough}.$$
Since the value of $\mathbb{E}[\varphi(X_T)] $ is again not analytically tractable, one needs to approximate $X_T$ by $\hat{X}_T$ which can be simulated from equation \eqref{eqn1} by a time discretization scheme. It is well-known that for classical approximation schemes, such as the Euler-Maruyama (EM) scheme, the error of the estimate depends on $T$ and may go to $\infty$ as $T \to \infty$.
Therefore, it would be useful to construct a numerical approximation scheme for $X_T$ such that the error of the estimate does not depend on $T$. 

An extensive body of literature exists on numerical SDE methods for SDEs (see \cite{KP, MT, KB}). Additionally, significant attention has been given recently to numerical methods for SDEs with non-Lipschitz coefficients. In particular, it has been demonstrated in \cite{HJKa} that for SDEs with super-linear growth coefficients, the classical Euler-Maruyama scheme may fail to converge in the $L^p$-norm. To address this issue, various modified Euler-Maruyama schemes have been developed, including the tamed Euler-Maruyama scheme (\cite{HJKb, Sabanis1, Sabanis2, DKS}) and the truncated Euler-Maruyama scheme (\cite{Mao}). These schemes have been further developed for SDE with jumps and  Markovian switching in \cite{Deng, KS2, HLM, Nguyen}
In \cite{FG}, Fang and Giles introduced an adaptive EM method and showed its convergence in $L^2$-norm on the whole time interval $[0,\infty)$ when applying for a class of SDEs whose drift coefficient is polynomial growth Lipschitz continuous and diffusion coefficient is bounded and globally Lipschitz continuous. 
In  \cite{KLN, KLNT, LLNT, MKV}, the tamed-adaptive EM scheme was proposed by combining the ideas of the adaptive scheme in \cite{FG} with the tamed scheme introduced in \cite{HJKb}. It has been proven in \cite{KLN, KLNT, LLNT, MKV} that the tamed-adaptive EM scheme also converges in $L^2$-norm on the whole time interval $[0,\infty)$ when applying for a class of SDEs whose drift and diffusion coefficients are locally Lipschitz and superlinearly growth.  
Furthermore, the tamed-adaptive Euler-Maruyama scheme was introduced in \cite{KLN, KLNT, LLNT, MKV} by combining the adaptive scheme from \cite{FG} with the tamed scheme from \cite{HJKb, Sabanis1, Sabanis2}. It has been shown in \cite{KLN, KLNT, LLNT, MKV} that the tamed-adaptive Euler-Maruyama scheme also converges in the 
$L^2$-norm over the entire interval 
$[0,\infty)$ when applying for a class of SDEs whose drift and diffusion coefficients are locally Lipschitz and superlinearly growth. 

The purpose of this paper is to employ the tamed-adaptive Euler-Maruyama (TAEM) approximation scheme to construct an approximation for the invariant distribution of equation \eqref{eqn1}. Our approach is inspired by the works \cite{FG, KLN, KLNT, LLNT, MKV}. By adapting the methodology in \cite{Giles, FG}, we consider a Multi-level Monte Carlo scheme to estimate $\pi(\varphi)$, where $\pi$ represents the invariant distribution of the process $X$. This scheme applies to any time-discretization approximation of $X$ that satisfies a uniform-in-time estimate for the $L^2$-norm of the error (see condition \eqref{MLMC1}).
Subsequently, we present the TAEM scheme and demonstrate its strong convergence over both finite and infinite time intervals. Notably, this is the first paper to study long-time approximation for multidimensional Markovian switching SDEs with superlinear growth diffusion coefficients, even in the case of SDEs without jump components.

The structure of this paper is as follows: Section \ref{Sec:2} introduces a set of conditions on the coefficients of equation \eqref{eqn1} along with moment estimates for the exact solution. 
Section \ref{InvaMeasAppro} establishes the existence and uniqueness of the invariant measure and then proposes a Multi-level Monte Carlo approximation scheme for it. In Section \ref{sec:main}, we present the TAEM scheme, analyzing its convergence over both finite and infinite time intervals. 
Finally, Section \ref{sec:nume} provides a numerical analysis of the TAEM scheme.

\section{Model assumptions and moment estimates} \label{Sec:2}
For $x \in \mathbb{R}$, $\lceil x \rceil$ is the smallest integer greater than or equal to $x$. 
For vectors $u, v \in \mathbb{R}^d$, we denote by $|v|$ the Euclidean norm of $v$, and by $\langle v,w\rangle$ the inner product of $u$ and $v$. 
For a square matrix $A$, we denote by $A^\mathsf{T}$ its transpose and by $|A|$ its  Frobenius  norm. 
For each $L>0$ and $m\geq 0$, let $\textbf{Lip}(L,m)$ denote the set of all functions $f$ defined on $\mathbb{R}^d$, such that 
$|f(x) - f(y)| \leq L(1+ |x|^m + |y|^m)|x-y|$.

Throughout this paper, we always  assume that the following conditions on  the coefficients $b, \sigma, \gamma$ and the L\'evy measure $\nu$ hold. 
	\begin{enumerate}[\indent \bf C1.] 
		\item  
  There exist positive constants $L_1$ and  $l\geq 1$ such that for any $i \in S$, $b(i,.) \in \textbf{Lip}(L_1,l)$.
		\item  There exist positive constants $L_2$  and  $m\geq 1$  such that  for any $i \in S$, $\sigma(i,.) \in \textbf{Lip}(L_2,m)$.
		\item There exists a positive constant $L_3$ such that for any $i \in S$, $\gamma(i,.) \in \textbf{Lip}(L_3,0)$. 
		\item For all $1\leq p\leq q$, it holds that $\int_{\vert z\vert >1} \vert z\vert^p\nu(dz)<\infty$,  and $\int_{0< \vert z\vert \le 1} \vert z\vert \nu(dz)<\infty$.
		\item There exist $p_0 \geq 2$, $\zeta_0 \in \mathbb R$, and $\zeta_1 \in [0, +\infty)$ such that 
		$$ \langle x,b(i,x) \rangle  + \dfrac{p_0-1}{2}  |\sm(i,x)|^2 +\textcolor{black}{\dfrac{1}{2L_0}|\gamma(i,x)|^2 \int_{\bR_{0}^d}|z|\left(\left(1+L_{0}|z|\right)^{p_0-1}-1\right) \nu(d z)} \leq {\zeta_0}  |x|^2+{\zeta_1},$$
  for any $i \in S$,  $x \in \br^d$ and $L_0 = \max\{3L_3, \sum_{i = 1}^N|\gamma(i,0)|\}$. 
		\item  There exist constants $\epsilon>0$ and  $\alpha \in \bR$ such that for any $i \in S; x,y \in \bR^d$, it holds that 
		\begin{align*} 
		2\langle x-y, b(i,x)-b(i,y)\rangle+ &\left(1+\epsilon\right) |\sm(i,x)-\sm(i,y)|^{2}+ |\gamma(i,x)-\gamma(i,y)|^2 \int_{\bR_{0}^d}  |z|^{2} \nu(d z) \leq \alpha |x-y|^{2}.
		\end{align*}
	\end{enumerate}

\begin{Rem}\label{rem2.1}
	\begin{enumerate}[\indent (i)]
\item Conditions \textbf{C1}--\textbf{C4} are sufficient to guarantee that equation \eqref{eqn1} has a unique strong solution 	(see Theorem 2 in \cite{GK80} or Theorem III.2.32 in \cite{JS03}).
  \item When the coefficient $\sigma$ is  Lipschitz, that is, $m=0$ in \textbf{C2},   we can choose $\epsilon=0$ in \textbf{C6}, which will be seen in the proof of Theorem \ref{dinh ly 6}.
		\item When the coefficient  $\gamma$ satisfies Condition \textbf{C3}, then for any $i \in S ;x \in \br^d$, it holds that 
		\begin{equation}
			|\gamma(i,x)| \leq L_0 (1+|x|), \quad \text{ with } L_0 = \max\left\{3L_3, \sum_{i \in S}|\gamma(i,0)|\right\}.  \notag
		\end{equation}
		\item It follows from  Condition \textbf{C5} that,  for \textcolor{black}{any $i \in S; x \in \mathbb{R}^{d}$ and} $p\in[2,p_0]$,
		$$ \langle x,b(i,x) \rangle  + \dfrac{p-1}{2}  |\sm(i,x)|^2 +\textcolor{black}{\dfrac{1}{2L_0}|\gamma(i,x)|^2 \int_{\bR_{0}^d}|z|\left(\left(1+L_{0}|z|\right)^{p-1}-1\right) \nu(d z)} \leq {\zeta_0}  |x|^2+{\zeta_1}.$$
	\end{enumerate}
\end{Rem}

We provide the following results on the moment of the exact solution, which might be of independent interest.   
\begin{Prop}\label{nghiem dung 1}
	Assume that $X=(X_t)_{t \geq 0}$ is a solution to equation \eqref{eqn1}, 
 $\sigma$ is bounded on every compact subset of \ $S \times \mathbb{R}^d$, and \textbf{C4} holds for $q=2p_0$.
	Then, for any $p \in (0,p_0]$ and $t \ge 0$, there exists a positive constant $C_p$ which does not depend on $t$ such that
	\begin{align*} 
	\bE \left[\vert X_t\vert^p\right] \le \begin{cases}  C_p (1+e^{{\zeta_0} pt}) & \text{ if \;\;} {\zeta_0} \not = 0,\\
	C_p (1+t)^{p/2} & \text{ if \;\;} {\zeta_0}  = 0. \end{cases} 
	\end{align*}
\end{Prop}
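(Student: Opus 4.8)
The plan is to run an It\^o--Lyapunov argument with the $C^2$ function $V_p(x)=(1+|x|^2)^{p/2}$ for $p\in[2,p_0]$, reduce matters to a scalar differential inequality for $u(t):=\mathbb{E}[V_p(X_t)]$, and solve that inequality. It suffices to treat $p\in[2,p_0]$: for $p\in(0,2)$, Jensen's inequality gives $\mathbb{E}[|X_t|^p]\le(\mathbb{E}[|X_t|^2])^{p/2}\le(\mathbb{E}[V_2(X_t)])^{p/2}$, so the case $p=2$ implies the case $p\in(0,2)$ after an elementary manipulation of the exponents. Note also that $V_p$ does not involve the switching component and that $\theta$ and $Z$, being independent, a.s.\ have no common jump times, so $V_p(X_\cdot)$ jumps only along the jumps of $N$; the chain $\theta$ enters only through the fact that \textbf{C5} (equivalently Remark~\ref{rem2.1}(iv)) holds uniformly in $i\in S$, which makes every estimate below independent of $\theta_s$.

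Fix $p\in[2,p_0]$ and set $\tau_n=\inf\{t\ge0:|X_t|\ge n\}$, so that $\tau_n\uparrow\infty$ a.s.\ since \eqref{eqn1} does not explode (Remark~\ref{rem2.1}(i)). Applying It\^o's formula for jump diffusions to $V_p(X_{t\wedge\tau_n})$ and taking expectations, the Brownian and compensated-Poisson integrals are true martingales with zero mean: on $[0,\tau_n)$ the coefficient $\sigma$ is bounded (it is bounded on compact sets by hypothesis) and $\gamma$ is bounded (Remark~\ref{rem2.1}(iii)), while the compensated-jump integrand $V_p(X_{s-}+\gamma(\theta_{s-},X_{s-})z)-V_p(X_{s-})$ is square-integrable against $\nu(dz)\,ds$ because $\int_{|z|\le1}|z|^2\nu(dz)<\infty$ and, by \textbf{C4} with $q=2p_0$, $\int_{|z|>1}|z|^{2p}\nu(dz)<\infty$. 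Therefore
\begin{equation*}
\mathbb{E}\big[V_p(X_{t\wedge\tau_n})\big]=V_p(x_0)+\mathbb{E}\int_0^{t\wedge\tau_n}\mathcal{L}V_p(\theta_s,X_s)\,ds,
\end{equation*}
where the generator acts on $V_p$ by
\begin{equation*}
\mathcal{L}V_p(i,x)=\langle\nabla V_p(x),b(i,x)\rangle+\tfrac12\mathrm{tr}\big(\sigma\sigma^{\mathsf{T}}(i,x)\nabla^2V_p(x)\big)+\int_{\bR_0^d}\big(V_p(x+\gamma(i,x)z)-V_p(x)-\langle\nabla V_p(x),\gamma(i,x)z\rangle\big)\nu(dz),
\end{equation*}
well defined thanks to \textbf{C4}.

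The core step is a pointwise upper bound for $\mathcal{L}V_p$. From $\nabla V_p(x)=p(1+|x|^2)^{p/2-1}x$ and $\nabla^2V_p(x)=p(1+|x|^2)^{p/2-1}I+p(p-2)(1+|x|^2)^{p/2-2}xx^{\mathsf{T}}$, using $p\ge2$ and $|\sigma^{\mathsf{T}}x|^2\le|\sigma|^2|x|^2\le|\sigma|^2(1+|x|^2)$, the sum of the drift and diffusion terms is bounded above by $p(1+|x|^2)^{p/2-1}\big(\langle x,b(i,x)\rangle+\tfrac{p-1}{2}|\sigma(i,x)|^2\big)$. For the jump term, a second-order Taylor expansion of $V_p$ along the segment $[x,x+\gamma(i,x)z]$, combined with the growth bound $|\gamma(i,x)|\le L_0(1+|x|)$ from Remark~\ref{rem2.1}(iii) --- which is precisely why the constant $L_0$ appears inside \textbf{C5} --- bounds it by $p(1+|x|^2)^{p/2-1}\cdot\tfrac{1}{2L_0}|\gamma(i,x)|^2\int_{\bR_0^d}|z|\big((1+L_0|z|)^{p-1}-1\big)\nu(dz)$, up to a remainder of order $(1+|x|^2)^{(p-2)/2}$. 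Summing the three contributions, invoking Remark~\ref{rem2.1}(iv), and using $\zeta_0|x|^2=\zeta_0(1+|x|^2)-\zeta_0$ to absorb the constants $\zeta_0,\zeta_1$ and the Taylor remainder into a single term of order $(1+|x|^2)^{(p-2)/2}$, we arrive at
\begin{equation*}
\mathcal{L}V_p(i,x)\le p\zeta_0\,V_p(x)+\widehat{C}_p\,V_p(x)^{(p-2)/p},
\end{equation*}
with $\widehat{C}_p$ independent of $i$, $x$ and $t$.

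It remains to pass from this to a bound on $u(t)$. If $\zeta_0\ge0$, the right-hand side above is nonnegative, so substituting it into the It\^o identity, using $\mathbf{1}_{\{s<\tau_n\}}V_p(X_s)\le V_p(X_{s\wedge\tau_n})$ and Jensen's inequality for the concave map $r\mapsto r^{(p-2)/p}$, gives $u_n(t):=\mathbb{E}[V_p(X_{t\wedge\tau_n})]\le V_p(x_0)+\int_0^t\big(p\zeta_0 u_n(s)+\widehat{C}_p u_n(s)^{(p-2)/p}\big)ds$; a comparison argument then bounds $u_n$, uniformly in $n$, by the solution $y$ of $y'=p\zeta_0 y+\widehat{C}_p y^{(p-2)/p}$, $y(0)=V_p(x_0)$. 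The substitution $w=e^{-p\zeta_0 t}y$ turns this ODE into $\tfrac{d}{dt}w^{2/p}\le\tfrac{2\widehat{C}_p}{p}e^{-2\zeta_0 t}$, and one integration yields $y(t)\le C_p(1+e^{p\zeta_0 t})$ if $\zeta_0>0$ and $y(t)\le C_p(1+t)^{p/2}$ if $\zeta_0=0$; Fatou's lemma transfers these to $u(t)$. If $\zeta_0<0$, fix $\lambda\in(0,p|\zeta_0|)$; then the generator estimate gives $\mathcal{L}V_p(i,x)+\lambda V_p(x)\le(p\zeta_0+\lambda)V_p(x)+\widehat{C}_p V_p(x)^{(p-2)/p}\le C_\lambda$ for some constant $C_\lambda$, the leading coefficient being negative. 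Applying It\^o to $e^{\lambda(t\wedge\tau_n)}V_p(X_{t\wedge\tau_n})$, taking expectations, and letting $n\to\infty$ via Fatou gives $e^{\lambda t}u(t)\le V_p(x_0)+\tfrac{C_\lambda}{\lambda}e^{\lambda t}$, hence $u(t)\le V_p(x_0)+C_\lambda/\lambda=:C_p$, a bound of the asserted form. Since $\mathbb{E}[|X_t|^p]\le u(t)$ this settles $p\in[2,p_0]$, and the first paragraph handles $p\in(0,2)$. The step I expect to be most delicate is the jump-term estimate in the generator: reproducing the Taylor remainder of $V_p$ in exactly the shape encoded by \textbf{C5} (hence the particular constant $L_0$), and controlling the moments of $\nu$ --- in particular \textbf{C4} with $q=2p_0$ --- so that the localization and the martingale cancellations are rigorous.
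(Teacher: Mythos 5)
Your proposal is correct in outline and follows essentially the route the paper intends: the paper omits this proof entirely, deferring to Proposition 2.5 of \cite{MKV}, whose argument is exactly the It\^o--Lyapunov computation with $V_p(x)=(1+|x|^2)^{p/2}$, localization, the generator bound engineered to match Condition \textbf{C5} (which is why the constant $L_0$ and the factor $\int|z|((1+L_0|z|)^{p-1}-1)\nu(dz)$ appear there), and a scalar differential inequality solved by the substitution $w=e^{-p\zeta_0 t}y$. The only step you leave unverified --- that the Taylor remainder of the jump term is dominated, with the correct leading-order constant, by the \textbf{C5} expression (this is where the slack built into $L_0=\max\{3L_3,\sum_i|\gamma(i,0)|\}$ is consumed) --- is precisely the computation carried out in \cite{MKV}, and you correctly identify it as the delicate point.
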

\noindent Note that when ${\zeta_0}<0$, we get that $\sup_{t \geq 0} \bE \left[\vert X_t\vert^p\right] \le 2C_p$.
The proof of Proposition \ref{nghiem dung 1} will be skipped since it is similar to the one of Proposition 2.5 in \cite{MKV}.

\section{Adaptive Multi-level Monte Carlo for invariant distribution}\label{InvaMeasAppro}

\subsection{Existence and uniqueness of the invariant measure}
In this section, we shall prove  the existence and uniqueness of the invariant measure of the class of Markovian switching L\'evy-driven SDEs  (\ref{eqn1}) with super-linearly growth coefficients.

The following lemma shows how the solution of $\eqref{eqn1}$ depend on its initial value. 
\begin{Lem}\label{SDEcontract} 
    Assume that  $\sigma$ is bounded on every compact subset of $\mathbb{R}^d \times S$, and \textbf{C4} holds for $q = 2p_0$. Let $X_t$ and $Y_t$ be two solutions of \eqref{eqn1}. Then,  for any $ t > 0,$ it holds that 
    $$
    \mathbb{E}[|X_t - Y_t|^2] \leq e^{\alpha t}  \mathbb{E}[|X_0 - Y_0|^2].$$
\end{Lem}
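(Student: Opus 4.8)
The plan is to apply the It\^o formula for jump processes to $t\mapsto e^{-\alpha t}|X_t-Y_t|^2$ and to match the resulting drift with Condition \textbf{C6}. Put $U_t:=X_t-Y_t$. Since $X$ and $Y$ are driven by the same Brownian motion $W$, the same Poisson random measures $N_1,\dots,N_d$ and the same chain $\theta$, the process $U$ solves
\begin{equation*}
dU_t=\big(b(\theta_t,X_t)-b(\theta_t,Y_t)\big)\,dt+\big(\sigma(\theta_t,X_t)-\sigma(\theta_t,Y_t)\big)\,dW_t+\big(\gamma(\theta_{t-},X_{t-})-\gamma(\theta_{t-},Y_{t-})\big)\,dZ_t .
\end{equation*}
Here $\theta$ intervenes only as a c\`adl\`ag parameter: a jump of $\theta$ does not make $X$, $Y$ or $U$ jump, so the only jumps of $U$ come from $Z$, and at a jump time of $N_i$ with mark $z_i$ the increment $\Delta U_s$ equals $z_i$ times the $i$-th column of $\gamma(\theta_{s-},X_{s-})-\gamma(\theta_{s-},Y_{s-})$.

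First I would localise. Fix $R>0$, set $\tau_R:=\inf\{t\ge0:|X_t|\vee|Y_t|\ge R\}$ (so $\tau_R\uparrow\infty$ a.s., since \eqref{eqn1} is non-explosive by Remark \ref{rem2.1}(i)), and apply It\^o's formula to $f(t,u)=e^{-\alpha t}|u|^2$ up to time $t\wedge\tau_R$. The Brownian part contributes $|\sigma(\theta_s,X_s)-\sigma(\theta_s,Y_s)|^2$ to the drift of $|U_s|^2$; summing the squared jumps $|\Delta U_s|^2$ and compensating, the jump part contributes, in expectation, a term that is at most $\mathbb{E}\int_0^{t\wedge\tau_R}e^{-\alpha s}|\gamma(\theta_s,X_s)-\gamma(\theta_s,Y_s)|^2\big(\int_{\mathbb{R}_0^d}|z|^2\,\nu(dz)\big)\,ds$ — here one uses that the squared Euclidean norm of any column of a matrix is at most its squared Frobenius norm, together with $\int_{\mathbb{R}_0^d}|z|^2\nu(dz)=\sum_{i=1}^d\int_{\mathbb{R}_0}z_i^2\,\nu_i(dz_i)$, valid because the components of $Z$ are independent. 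On $[0,\tau_R]$ the coefficients $b(\theta_s,\cdot),\sigma(\theta_s,\cdot),\gamma(\theta_s,\cdot)$ are bounded (by \textbf{C1}--\textbf{C3}), so the $dW_s$- and $\widetilde N_i(ds,dz_i)$-integrals are genuine mean-zero martingales up to $t\wedge\tau_R$ and drop out after taking expectations. Using in addition $|\sigma(\theta_s,X_s)-\sigma(\theta_s,Y_s)|^2\le(1+\epsilon)|\sigma(\theta_s,X_s)-\sigma(\theta_s,Y_s)|^2$ ($\epsilon>0$), Condition \textbf{C6} applied pointwise at $(\theta_s,X_s,Y_s)$ shows that the It\^o drift of $|U_s|^2$ is at most $\alpha|U_s|^2$, hence that of $e^{-\alpha s}|U_s|^2$ is nonpositive; therefore
\begin{equation*}
\mathbb{E}\big[e^{-\alpha(t\wedge\tau_R)}|U_{t\wedge\tau_R}|^2\big]\le\mathbb{E}\big[|U_0|^2\big].
\end{equation*}

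Finally I would let $R\to\infty$. Since $\tau_R\uparrow\infty$ a.s. and $s\mapsto e^{-\alpha s}|U_s|^2$ is c\`adl\`ag, $e^{-\alpha(t\wedge\tau_R)}|U_{t\wedge\tau_R}|^2\to e^{-\alpha t}|U_t|^2$ a.s., so Fatou's lemma yields $\mathbb{E}[e^{-\alpha t}|U_t|^2]\le\mathbb{E}[|U_0|^2]$, that is $\mathbb{E}[|X_t-Y_t|^2]\le e^{\alpha t}\mathbb{E}[|X_0-Y_0|^2]$. (If $\mathbb{E}[|X_0-Y_0|^2]=\infty$ there is nothing to prove; otherwise $\mathbb{E}[|X_t-Y_t|^2]<\infty$ for every $t$ by Proposition \ref{nghiem dung 1} with $p=2$ and Minkowski's inequality, so the manipulations above are legitimate.) I expect the main obstacle to be purely technical: the localisation is what makes the martingale terms vanish without an a priori $L^2$-bound on the stochastic integrands over the whole half-line (such a bound is unavailable because of the superlinear growth of $b$ and $\sigma$), and one must be careful when identifying the jump contribution with the $\nu$-integral appearing in \textbf{C6}. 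Once these are handled, the estimate is an immediate consequence of the monotonicity assumption \textbf{C6}, with no genuine Gr\"onwall argument required since the map $r\mapsto\mathbb{E}[e^{-\alpha(r\wedge\tau_R)}|U_{r\wedge\tau_R}|^2]$ is already nonincreasing.
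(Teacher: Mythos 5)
Your proof follows essentially the same route as the paper: apply It\^o's formula to $e^{-\alpha t}|X_t-Y_t|^2$ and invoke Condition \textbf{C6} to make the drift nonpositive, so that $\mathbb{E}[e^{-\alpha t}|X_t-Y_t|^2]\le\mathbb{E}[|X_0-Y_0|^2]$. The only difference is that you spell out the localisation by $\tau_R$ and the Fatou passage needed to discard the martingale terms, which the paper's proof leaves implicit; this is a welcome addition, not a divergence.
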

\begin{proof}
    Let $e_t = X_t - Y_t$. It follows from It\^o's formula that,  for any $t \geq 0$, 
    \begin{align*}
        &e^{-\alpha t} |e_t|^2
         = |e_0|^2 - \int_0^t \alpha e^{-\alpha s} |e_s|^2 ds + \int_0^t 2 e^{-\alpha s} \left \langle e_s, (\sigma(\theta_s, X_s) -  \sigma(\theta_s, Y_s)) dW_s \right \rangle \notag \\
        & \quad + \int_0^t e^{-\alpha s} \Big( 2 \left \langle e_s, b(\theta_s, X_s) -  b(\theta_s, Y_s)\right \rangle + |\sigma(\theta_s, X_s) -  \sigma(\theta_s, Y_s)|^2  \\
        & \qquad  + |\gamma(\theta_{s}, X_{s}) - \gamma(\theta_{s}, Y_{s})|^2 \int_{\mathbb{R}_0^d} |z|^2 \nu (dz)\Big) ds \notag \\
        & \quad + \int_0^t \int_{\mathbb{R}_0^d} e^{-\alpha s} (\left| e_{s-} + \left( \gamma(\theta_{s-}, X_{s-}) - \gamma(\theta_{s-}, Y_{s-})  \right) z \right|^2 - |e_{s-}|^2) \widetilde{N}(ds, dz). 
    \end{align*}
    It follows from  \textbf{C6} that $ \mathbb{E}[e^{-\alpha t} |e_t|^2] \leq \e[|e_0|^2]$, which implies the desired result. 
\end{proof}
Before the statement and proof of the next theorem, we shall introduce some notation and recall some essential properties.\\
Let $X_t(x)$ denote process $X$ with $X_0 = x$. Also, for some $B \in \mathcal{B}(\mathbb{R}^d)$, we define $p_x(t,B) := \mathbb{P}(X_t(x) \in B)$.
By the Markov property, we have the Chapman–Kolmogorov equation
\begin{equation}\label{Chap-Kol}
    p_x(t + s, B) = \int_{\mathbb{R}^d} p_x(t,dy)p_y(s,B).
\end{equation}
    
The next theorem will state the existence and uniqueness of the invariant measure of SDE (\ref{eqn1}).
\begin{Thm} \label{exi-uni inva}
    Assume that Conditions \textbf{C1 - C6} hold with $\zeta_0 < 0$,  $\alpha<0$, $p_0 \ge 2$, $\sigma$ is bounded on every compact subset of $\mathbb{R}^d \times S$, and \textbf{C4} holds for $q = 2p_0$. Then there exists a unique invariant measure of the SDE (\ref{eqn1}).
\end{Thm}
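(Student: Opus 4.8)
The plan is to combine the contraction estimate of Lemma \ref{SDEcontract} with the uniform-in-time moment bounds of Proposition \ref{nghiem dung 1} to produce a Cauchy sequence of laws in a suitable metric, and then identify its limit as the unique invariant measure.

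\textbf{Step 1: uniqueness.} This is immediate from Lemma \ref{SDEcontract}. If $\pi_1, \pi_2$ are two invariant measures, start two copies $X_t, Y_t$ of \eqref{eqn1} on the same probability space, driven by the same $W$, $Z$, $\theta$, with $X_0 \sim \pi_1$, $Y_0 \sim \pi_2$ chosen independently. For each $t$, $X_t \sim \pi_1$ and $Y_t \sim \pi_2$, so the Wasserstein-$2$ distance satisfies $\mathcal{W}_2(\pi_1, \pi_2)^2 \le \bE[|X_t - Y_t|^2] \le e^{\alpha t}\bE[|X_0 - Y_0|^2] \to 0$ as $t \to \infty$ since $\alpha < 0$; hence $\pi_1 = \pi_2$. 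Note one needs $\bE[|X_0 - Y_0|^2] < \infty$, which holds because Proposition \ref{nghiem dung 1} with $\zeta_0 < 0$ gives finite second moments for $\pi_1, \pi_2$ (any invariant measure is a stationary law, so its moments are controlled by $\sup_{t\ge 0}\bE[|X_t|^2]$).

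\textbf{Step 2: existence via a Cauchy sequence.} Fix the deterministic initial point $x_0$ and consider the family of laws $\mu_t := p_{x_0}(t,\cdot) = \mathrm{Law}(X_t(x_0))$. I claim $(\mu_t)_{t\ge 0}$ is Cauchy in $\mathcal{W}_2$. For $s, t \ge 0$ with, say, $t \le s$, couple $X_t(x_0)$ with $X_s(x_0)$ as follows: by the Markov property (the Chapman--Kolmogorov relation \eqref{Chap-Kol}), $\mu_s$ is the law at time $t$ of the solution started from the random initial condition $X_{s-t}(x_0)$. So run one copy from $x_0$ and an independent-increments-coupled copy from $X_{s-t}(x_0)$ under the same driving noise; Lemma \ref{SDEcontract} gives
\[
\mathcal{W}_2(\mu_t, \mu_s)^2 \le e^{\alpha t}\, \bE\big[\,|x_0 - X_{s-t}(x_0)|^2\,\big] \le e^{\alpha t}\big(2|x_0|^2 + 2\sup_{u\ge 0}\bE[|X_u(x_0)|^2]\big) \le C e^{\alpha t},
\]
where $C$ is finite and independent of $s,t$ by Proposition \ref{nghiem dung 1} (the remark after it, using $\zeta_0 < 0$). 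Since $\alpha < 0$, the right-hand side tends to $0$ uniformly in $s \ge t$ as $t \to \infty$, so $(\mu_t)$ is Cauchy. The space of probability measures on $\mathbb{R}^d$ with finite second moment is complete under $\mathcal{W}_2$, so $\mu_t \to \pi$ for some $\pi$ with $\int |x|^2\,\pi(dx) < \infty$.

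\textbf{Step 3: $\pi$ is invariant.} For fixed $h > 0$ and any bounded Lipschitz $\varphi$, the Markov property gives $\int \bE[\varphi(X_h(y))]\,\mu_t(dy) = \int \varphi \, d\mu_{t+h}$. One shows $y \mapsto \bE[\varphi(X_h(y))]$ is continuous with at most quadratic growth (continuity in the initial datum again follows from Lemma \ref{SDEcontract}), so letting $t\to\infty$ and using $\mathcal{W}_2$-convergence of $\mu_t$ to $\pi$ (which implies convergence of integrals of continuous functions with sub-quadratic growth, together with uniform integrability from the uniform second-moment bound) yields $\int \bE[\varphi(X_h(y))]\,\pi(dy) = \int \varphi\, d\pi$. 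Since $h > 0$ and $\varphi$ were arbitrary, $\pi$ is invariant.

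\textbf{Main obstacle.} The routine parts (uniqueness, the Cauchy estimate) are straightforward given Lemma \ref{SDEcontract} and Proposition \ref{nghiem dung 1}. The point requiring care is Step 3: passing to the limit in $\int \bE[\varphi(X_h(y))]\,\mu_t(dy)$ requires the test function $y\mapsto\bE[\varphi(X_h(y))]$ to be continuous and of controlled growth so that $\mathcal{W}_2$-convergence suffices, and one must be slightly careful that the Markov chain component $\theta$ makes \eqref{eqn1} a Markov process only in the pair $(X_t,\theta_t)$ — so strictly one should work with invariant measures of $(X,\theta)$ on $\mathbb{R}^d \times S$ and project, or equivalently run all couplings with a common $\theta$, which is exactly what Lemma \ref{SDEcontract} already does.
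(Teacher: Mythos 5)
Your uniqueness step is essentially the paper's: both rest on the contraction of Lemma \ref{SDEcontract}, the paper phrasing it through the dual ($\mathcal{W}_1$) formulation applied to $P_t\varphi$ and you through a $\mathcal{W}_2$ coupling of two stationary solutions. For existence, however, you take a genuinely different route. The paper runs the Krylov--Bogoliubov machine: it uses the Lyapunov function $V(x)=|x|^2$ together with Condition \textbf{C5} (i.e.\ $\zeta_0<0$) to prove tightness of the Ces\`aro averages $\mu_n=\frac{1}{t_n}\int_0^{t_n}p_x(s,\cdot)\,ds$, extracts a weak limit by Prokhorov's theorem, and checks invariance via the Chapman--Kolmogorov identity and Fubini. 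You instead show that the single-time marginals $\mu_t=\mathrm{Law}(X_t(x_0))$ are Cauchy in $\mathcal{W}_2$, using the contraction (hence $\alpha<0$) together with the uniform second-moment bound of Proposition \ref{nghiem dung 1} (hence $\zeta_0<0$), and then identify the limit as invariant. Under the stated hypotheses both arguments close; the paper's route is more robust in that existence uses only the dissipativity $\zeta_0<0$ and not the contraction $\alpha<0$, whereas yours is shorter, avoids Prokhorov, and yields for free an exponential rate of convergence of $\mu_t$ to $\pi$ in $\mathcal{W}_2$ (which is exactly what Lemma \ref{expconv} needs later).

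Two points in your write-up deserve more care. First, in Step 1 you need $\bE[|X_0-Y_0|^2]<\infty$, i.e.\ that every invariant measure has finite second moment; Proposition \ref{nghiem dung 1} as stated controls moments of solutions started from a deterministic point, so this requires a separate (easy) Lyapunov/truncation argument rather than the circular appeal you sketch --- note the paper's $\mathcal{W}_1$ version carries the same implicit requirement of finite first moments. Second, your Cauchy coupling compares $X_t(x_0)$ with the time-$t$ solution started from $(X_{s-t}(x_0),\theta_{s-t})$, and Lemma \ref{SDEcontract} requires both solutions to be driven by the \emph{same} chain $\theta$; this is only immediate if $\theta$ is started from its stationary law, or if one works throughout with the Markov pair $(X,\theta)$ on $\mathbb{R}^d\times S$ and projects at the end. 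You flag this correctly in your final remark; the paper's identity \eqref{Chap-Kol} glosses over the same point.
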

\begin{proof}
Firstly, we shall prove the uniqueness of the invariant measure. Assume that there exist two invariant measures $\nu$ and $\mu$. Let $P_t\varphi (x) = \bE[\varphi(X_t)|X_0 = x] = \int_{\mathbb{R}^d} \varphi(y) p_x(t, dy)$, where $\varphi \in C^2_b$. It follows from Lemma \ref{SDEcontract} that 
\begin{align*}
    \left|\int \varphi(x)(\nu - \mu)(dx)\right| = \left|\int P_t\varphi(x)(\nu - \mu)(dx)\right| \leq ||\triangledown \varphi||_{\infty}  e^{\alpha t/2}  \mathcal{W}_1(\nu, \mu),
\end{align*}
where
$
||f||_{\infty} = \sup_x |f(x)|, \ \mathcal{W}_1(\nu, \mu) = \sup_{||\triangledown f|| \leq 1} \left| \int f(x)(\nu - \mu)(dx) \right|.
$
Let $t \rightarrow
+ \infty$, we have that $\left|\int \varphi(x)(\nu - \mu)(dx)\right| = 0$ for any $\varphi \in C^2_b$, which implies $\nu = \mu$.

Secondly, we prove that there exists such an invariant measure. Let $\mathcal{L}_i$ be the infinitesimal generator of $X$ where $\theta = i$. 
Consider the function $V(x) = |x|^2 \geq 0$ and
\begin{align*}
    \mathcal{L}_i V(x) 
    &= \langle V_x(x), b(i,x) \rangle + \dfrac{1}{2} V_{xx}(x) \times \text{trace}[\sigma^\top\sigma(i,x)] \\
    & \quad + \int_{\mathbb{R}_0^d} (V(x + \gamma(i,x)z) - V(x) - \langle V_x(x), \gamma (i,x)z \rangle ) \nu(dz),
\end{align*}
for any $x \in \mathbb{R}^d$ and $i \in S$.
It follows from Condition \textbf{C5} that 
\begin{align*}
    \mathcal{L}_i V(x) 
    & \leq 2 \left \langle x, b(i,x) \right \rangle + \left|\sigma(i,x)\right|^2 + \int_{\mathbb{R}_0^d} |\gamma (i,x)z|^2 \nu(dz)
     \leq 2\zeta_0 |x|^2 + 2\zeta_1 \leq 2\zeta_1.
\end{align*}
This implies that there exists a constant $C$  such that $\sup_{i \in S}\sup_{x \in \mathbb{R}^d} \mathcal{L}_i V(x) \leq C$. Moreover, for any $M > 0$, there exists a compact set $K_M$ so that $\sup_{i \in S}\sup_{x \in K_M^C}  \mathcal{L}_i V(x) \leq -M$  where $K_M^C:= \mathbb{R}^d \backslash K_M$.

Applying the It\^o's formula for $V(X_t(x))$, then taking expectation both sides, we obtain
\begin{align*}
    0 
    &\leq \mathbb{E}[V(X_t(x))] = V(x) + \mathbb{E}\left[\int_0^t \sum_{i \in S} \mathcal{L}_i V(X_s(x)) \mathbb{I}_{\{\theta_s = i\}} ds\right]\\
    & = V(x) + \mathbb{E}\left[\int_0^t \mathbb{I}_{K_M^C}(X_s(x))  \sum_{i \in S} \mathcal{L}_i V(X_s(x)) \mathbb{I}_{\{\theta_s = i\}}ds + \int_0^t \mathbb{I}_{K_M}(X_s(x)) \sum_{i \in S} \mathcal{L}_i V(X_s(x))ds \right]\\
    & \leq V(x) + \mathbb{E}\left[\int_0^t \sum_{i \in S} (-M)\;\mathbb{I}_{K_M^C}(X_s(x)) \mathbb{I}_{\{\theta_s = i\}}ds + \int_0^t \sum_{i \in S} C \; \mathbb{I}_{K_M}(X_s(x)) \mathbb{I}_{\{\theta_s = i\}}ds \right]\\
    & = V(x) + \mathbb{E}\left[\int_0^t \sum_{i \in S} (C + M)\;\mathbb{I}_{K_M}(X_s(x)) \mathbb{I}_{\{\theta_s = i\}}ds - \int_0^t \sum_{i \in S} M \mathbb{I}_{\{\theta_s = i\}}ds \right]\\
    & \leq V(x) + (C+M) \int_0^t \mathbb{P}(X_s(x) \in K_M)ds - Mt.
\end{align*}
Thus, for some fixed initial value $x \in \mathbb{R}^d$, for all $M>0$, there exists a compact set $K_M$ satisfying
\begin{align}\label{CondTightness}
\dfrac{-V(x)}{t  (C + M)} + \dfrac{M}{C + M} \leq \dfrac{1}{t} \int_0^t \mathbb{P}(X_s(x) \in K_M)ds.
\end{align}
Let $t_n \uparrow \infty$ be a sequence of time points. For each $n$, we define a Borel measure $\mu_n$ on $\mathbb{R}^d$ as
$$
\mu_n(K) := \frac{1}{t_n} \int_0^{t_n} \mathbb{P}(X_s(x) \in K)ds.
$$
From (\ref{CondTightness}), we have that
$$
\lim_{M \rightarrow \infty} \sup_{n} \dfrac{1}{t_n} \int_0^{t_n} \mathbb{P}(X_s(x) \in K_M)ds = 1.
$$
Thus, for any $\varepsilon > 0$, there exist $n_0$ and $M_0$ such that for any $n > n_0$,
$$
\dfrac{1}{t_n} \int_0^{t_n} \mathbb{P}(X_s(x) \in K_{M_0})ds > 1 - \varepsilon.
$$
It follows from Prokhorov's theorem that there exists a subsequence $\{\mu_{n_j}\}$ that converges weakly to a probability measure $\mu_0$. In what follows, we shall prove that $\mu_0$ is an invariant measure for $X$.

For any $f \in C^1_b$, we have
\begin{equation}\label{Converge}
    \int (P_t f)(y) \mu_0(dy) = \lim_{j \rightarrow \infty} \int (P_t f)(y) \mu_{n_j}(dy).
\end{equation}
For some fixed $n_j$, by Fubini's theorem and from (\ref{Chap-Kol}), we have
\begin{align*}
    \int (P_t f)(y) \mu_{n_j}(dy) 
    &= \int \dfrac{1}{t_{n_j}} \int_0^{t_{n_j}} (P_t f)(y) p_x(s, dy) ds = \dfrac{1}{t_{n_j}} \int_0^{t_{n_j}} \int \int f(z) p_y(t,dz) p_x(s, dy) ds\\
    &= \dfrac{1}{t_{n_j}} \int_0^{t_{n_j}} \int f(z) p_x(t + s,dz) ds = \dfrac{1}{t_{n_j}} \int_0^{t_{n_j}} (P_{t+s}f)(x) ds.
\end{align*}
By change of variables, the last expression above can be written as
\begin{align}\label{Change-Vari}
    \dfrac{1}{t_{n_j}} \int_0^{t_{n_j}} (P_{t+s}f)(x) ds 
    &= \dfrac{1}{t_{n_j}} \int_t^{t_{n_j} + t} (P_{u}f)(x) du\notag \\ 
    &=\dfrac{1}{t_{n_j}} \left[\int_0^{t_{n_j}} (P_{u}f)(x) du + \int_{t_{n_j}}^{t_{n_j} + t} (P_{u}f)(x) du - \int_0^{t} (P_{u}f)(x) du \right].
\end{align}
Thus, from (\ref{Converge}) and (\ref{Change-Vari}), we have that
\begin{align*}
    \int (P_t f)(y) \mu_0(dy) 
    &= \lim_{j \rightarrow \infty} \dfrac{1}{t_{n_j}} \left[\int_0^{t_{n_j}} (P_{u}f)(x) du + \int_{t_{n_j}}^{t_{n_j} + t} (P_{u}f)(x) du - \int_0^{t} (P_{u}f)(x) du \right]\\
    &= \lim_{j \rightarrow \infty} \dfrac{1}{t_{n_j}} \int_0^{t_{n_j}} (P_{u}f)(x) du = \lim_{j \rightarrow \infty} \int f(y) \mu_{n_j}(dy) = \int f(y) \mu_0(dy),
\end{align*}
where the last equality is due to the weak convergence of $\{\mu_{n_j}\}$. Therefore, $\mu_0$ is an invariant measure. This concludes the proof. 
\end{proof}

\subsection{Approximation algorithm for the invariant measure}
We consider the approximation for 
$   \pi(\varphi) := \mathbb{E}_{\pi}(\varphi) = \int_{\mathbb{R}^d} \varphi(x) \pi(dx),$
where $\pi$ is the invariant measure of the SDE (\ref{eqn1}) and $\varphi$ is some measurable function from $\mathbb R^d$ to $\mathbb R$. This quantity will be approximated by the Multi-level Monte Carlo method as follows. First, we suppose that for each $l=0,1,\ldots$, $X_T$ can be numerically approximated by $\widehat{X}^{l}_T$ such that 
\begin{equation} \label{MLMC1}
  \mathbb{E}[|\widehat{X}^l_T - X_T|^2] \leq \mathbf{K}_0 M^{-l},\quad l = 0,1,\ldots 
\end{equation} 
for some fixed positive  constants $\mathbf{K}_0$ and $M$, which depend on neither  $l$ nor  $T$.
We can write 
$$
\pi(\varphi) \approx \mathbb{E}[\varphi(X_T)] \approx  \mathbb{E}[\varphi(\widehat{X}^L_T)] = \mathbb{E}[\varphi(\widehat{X}^0_T)] + \sum_{l = 1}^L \mathbb{E}[\varphi(\widehat{X}^l_T) - \varphi(\widehat{X}^{l-1}_T)].
$$
Then, the Multi-level Monte Carlo (MLMC) estimator is the following telescoping sum:
\begin{equation} \label{MLMC estimator}
    \mathbb{E}[\varphi(\widehat{X}^L_T)] \approx \widehat{Y} := \dfrac{1}{N_0} \sum_{n=1}^{N_0} \varphi(\widehat{X}^{(n,0)}_T) + \sum_{l=1}^L\left\{\dfrac{1}{N_l} \sum_{n=1}^{N_l} (\varphi(\widehat{X}^{(n,l)}_T) - \varphi(\widehat{X}^{(n,l-1)}_T))\right\},
\end{equation}
where 
$(\widehat{X}^{(n,l)}_T)_{1 \leq n \leq N_l}$  are $N_l$ independent copies of $\widehat{X}^l_T$, and for each $n$, $\widehat{X}^{(n,l)}_T$ and $\widehat{X}^{(n,l-1)}_T$ must be generated by the same Brownian and jumps path. Moreover, if we denote by $\overline{C}_l$ the expected computational cost of a sample on level $l$, then we may deduce that the expected overall computational cost of the estimation will be
$$
    \mathbf{C}_{MLMC} := \sum_{l=0}^L N_l \overline{C}_l.
$$

\begin{Lem}\label{expconv}
Let  $\varphi \in \mathbf{Lip}(L_\varphi, 0)$. 
Assume that  $\widehat{X}^{l}_T$ satisfies \eqref{MLMC1}.   Then there exists a constant $\mathbf{K}_1 > 0$ depending on $x_0$ and $L_{\varphi}$ such that
    $$
    |\mathbb{E}\left[\varphi(X_t) - \pi(\varphi)\right]| \leq \mathbf{K}_1 e^{\alpha t/2}.
    $$
\end{Lem}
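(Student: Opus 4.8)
The plan is to compare the process $X_t = X_t(x_0)$, started from the deterministic point $x_0$, with a stationary version of the dynamics and then invoke the contraction estimate of Lemma~\ref{SDEcontract}. On a (possibly enlarged) probability space I would introduce an $\mathcal{F}_0$-measurable random variable $\xi$ with law $\pi$ that is independent of $(W, Z, \theta)$, and let $(Y_t)_{t\ge 0}$ be the solution of \eqref{eqn1} driven by the \emph{same} $W$, $Z$, $\theta$ but started from $Y_0 = \xi$. Conditioning on the value of $\xi$ and using that $\pi$ is invariant (Theorem~\ref{exi-uni inva}), one gets that $Y_t$ has law $\pi$ for every $t \ge 0$, so that $\mathbb{E}[\varphi(Y_t)] = \pi(\varphi)$.

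Since $\varphi \in \mathbf{Lip}(L_\varphi, 0)$ one has $|\varphi(x) - \varphi(y)| \le 3 L_\varphi |x - y|$ for all $x, y \in \mathbb{R}^d$, so by the Cauchy--Schwarz inequality,
$$
\left| \mathbb{E}[\varphi(X_t)] - \pi(\varphi) \right| = \left| \mathbb{E}[\varphi(X_t) - \varphi(Y_t)] \right| \le 3 L_\varphi\, \mathbb{E}[|X_t - Y_t|] \le 3 L_\varphi \left( \mathbb{E}[|X_t - Y_t|^2] \right)^{1/2}.
$$
The proof of Lemma~\ref{SDEcontract} uses only It\^o's formula, Condition~\textbf{C6}, and square-integrability of the initial data, hence it applies to the $\mathcal{F}_0$-measurable pair $(x_0, \xi)$ and gives $\mathbb{E}[|X_t - Y_t|^2] \le e^{\alpha t}\, \mathbb{E}[|x_0 - \xi|^2]$. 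Combining the two displays yields the assertion with $\mathbf{K}_1 := 3 L_\varphi\, (\mathbb{E}[|x_0 - \xi|^2])^{1/2}$, once we know that $\mathbb{E}[|x_0 - \xi|^2] < \infty$.

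To see the latter, note that $\mathbb{E}[|x_0 - \xi|^2] \le 2|x_0|^2 + 2\int_{\mathbb{R}^d} |y|^2\, \pi(dy)$, and since $\zeta_0 < 0$ Proposition~\ref{nghiem dung 1} gives $\sup_{s \ge 0}\mathbb{E}[|X_s(x_0)|^2] \le 2 C_2 < \infty$. Recalling that in the proof of Theorem~\ref{exi-uni inva} the measure $\pi$ arises as the weak limit of a subsequence of the Ces\`aro averages $\mu_n(\cdot) = t_n^{-1}\int_0^{t_n}\mathbb{P}(X_s(x_0) \in \cdot)\, ds$, for every $R > 0$ the bounded continuous map $y \mapsto |y|^2 \wedge R$ satisfies $\int (|y|^2 \wedge R)\, \pi(dy) = \lim_j \int (|y|^2 \wedge R)\, \mu_{n_j}(dy) \le \sup_{s \ge 0}\mathbb{E}[|X_s(x_0)|^2] \le 2 C_2$; letting $R \to \infty$ by monotone convergence gives $\int_{\mathbb{R}^d} |y|^2\, \pi(dy) \le 2 C_2$, so $\mathbf{K}_1 = 3 L_\varphi (2|x_0|^2 + 4 C_2)^{1/2}$ works. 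The only genuinely delicate point is this last step, the finiteness of the second moment of $\pi$ (established independently of Lemma~\ref{SDEcontract}, so that there is no circularity); the remainder is a routine synchronous-coupling estimate, and note that the hypothesis \eqref{MLMC1} does not enter this particular argument.
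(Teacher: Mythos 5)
Your proposal is correct and follows essentially the same route as the paper: a synchronous coupling with a stationary solution $Y$ started from $Y_0\sim\pi$, the Lipschitz bound on $\varphi$, Cauchy--Schwarz, and the contraction estimate of Lemma~\ref{SDEcontract}. The only difference is that you additionally verify $\int |y|^2\,\pi(dy)<\infty$ via the Ces\`aro-average construction of $\pi$ and Proposition~\ref{nghiem dung 1} --- a detail the paper leaves implicit when writing $\sqrt{\mathbb{E}[|Y_0|^2]}$ --- and you correctly track the factor $3L_\varphi$ coming from the definition of $\mathbf{Lip}(L_\varphi,0)$.
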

\begin{proof}
Let $Y_0$ be a random variable with density $\pi$ and $Y_t$ be the solution of \eqref{eqn1} with initial value $Y_0$. Since $Y_t$ also has density $\pi$,  thanks to  Lipschitz property of $\varphi$, Hölder's inequality, Lemma \ref{SDEcontract} and Proposition \ref{nghiem dung 1}, we have 
    \begin{align*}
        \left|\mathbb{E}\left[\varphi(X_t) - \pi(\varphi)\right]\right| 
        &= \left|\mathbb{E}\left[\varphi(X_t) - \varphi(Y_t)\right]\right|  \leq L_{\varphi} e^{\alpha t/2} \mathbb{E}[|X_0 - Y_0|^2]^{1/2}\leq  \mathbf{K}_1 e^{\alpha t/2},
    \end{align*}
    where $\mathbf{K}_1 = 2L_{\varphi}  [|x_0| + \sqrt{\mathbb{E}[|Y_0|^2]}]$.
\end{proof}

\begin{Lem}\label{variance bound}
    Let  $\varphi \in \mathbf{Lip}(L_\varphi, 0)$.  Assume that  $\widehat{X}^{l}_T$ satisfies \eqref{MLMC1}.  Then, there exists a positive constant $\mathbf{K}_2$ such that, for all level $l$, 
    $$
   \mathrm{Var}[\varphi(\widehat{X}^l_T) - \varphi(\widehat{X}^{l-1}_T)] \leq \mathbf{K}_2 M^{-l}.
    $$
\end{Lem}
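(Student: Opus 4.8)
The plan is to reduce the claim to the $L^2$-error estimate \eqref{MLMC1} via the elementary inequality $\mathrm{Var}[\xi] \le \mathbb{E}[\xi^2]$ together with the global Lipschitz continuity of $\varphi$.

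First I would apply $\mathrm{Var}[\xi] \le \mathbb{E}[\xi^2]$ with $\xi = \varphi(\widehat{X}^l_T) - \varphi(\widehat{X}^{l-1}_T)$, and then use that $\varphi \in \mathbf{Lip}(L_\varphi,0)$ means $|\varphi(x)-\varphi(y)| \le L_\varphi|x-y|$, which gives
$$\mathrm{Var}\big[\varphi(\widehat{X}^l_T) - \varphi(\widehat{X}^{l-1}_T)\big] \le \mathbb{E}\big[|\varphi(\widehat{X}^l_T) - \varphi(\widehat{X}^{l-1}_T)|^2\big] \le L_\varphi^2\, \mathbb{E}\big[|\widehat{X}^l_T - \widehat{X}^{l-1}_T|^2\big].$$
Next, since by construction $\widehat{X}^l_T$ and $\widehat{X}^{l-1}_T$ are driven by the same Brownian motion and the same jump path, they both approximate the same realization of the exact solution $X_T$; inserting $X_T$ and using $|a+b|^2 \le 2|a|^2 + 2|b|^2$ together with \eqref{MLMC1} at levels $l$ and $l-1$ yields
$$\mathbb{E}\big[|\widehat{X}^l_T - \widehat{X}^{l-1}_T|^2\big] \le 2\,\mathbb{E}\big[|\widehat{X}^l_T - X_T|^2\big] + 2\,\mathbb{E}\big[|X_T - \widehat{X}^{l-1}_T|^2\big] \le 2\mathbf{K}_0 M^{-l} + 2\mathbf{K}_0 M^{-(l-1)}.$$
Since $M^{-(l-1)} = M\,M^{-l}$, the right-hand side equals $2\mathbf{K}_0(1+M)M^{-l}$, so combining the two displays proves the lemma with $\mathbf{K}_2 := 2L_\varphi^2\mathbf{K}_0(1+M)$, a constant independent of both $l$ and $T$.

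I do not expect any genuine obstacle: the result is an immediate corollary of the uniform-in-time $L^2$ bound \eqref{MLMC1} and the boundedness of $\nabla\varphi$. The only point requiring care — and it is the conceptual heart of the MLMC construction rather than a technical difficulty — is that the two approximations on consecutive levels must be coupled through a common driving path, which is exactly what allows both of them to be compared to the single exact solution $X_T$ in the triangle inequality above, and hence produces the decaying bound $M^{-l}$ rather than a constant.
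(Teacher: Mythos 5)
Your proof is correct and follows exactly the same route as the paper: bound the variance by the second moment, apply the Lipschitz property of $\varphi$, insert $X_T$ with the inequality $(a+b)^2 \le 2(a^2+b^2)$, and invoke \eqref{MLMC1} at levels $l$ and $l-1$, arriving at the identical constant $\mathbf{K}_2 = 2L_\varphi^2\mathbf{K}_0(1+M)$. Nothing to add.
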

\begin{proof}
    By the Lipschitz property of  $\varphi$, we have
    $$
    \text{Var}[\varphi(\widehat{X}^l_T) - \varphi(\widehat{X}^{l-1}_T)] \leq \mathbb{E}[|\varphi(\widehat{X}^l_T) - \varphi(\widehat{X}^{l-1}_T)|^2] \leq L_{\varphi}^2 \; \mathbb{E}[|\widehat{X}^l_T - \widehat{X}^{l-1}_T|^2],
    $$
    where $\widehat{X}^l_t$ and $\widehat{X}^{l-1}_t$ share the same Brownian motion from $0$ to $T$. Applying the simple inequality $(a + b)^2 \leq 2(a^2 + b^2)$, which holds for all real numbers $a$ and $b$, we obtain that
    $$
        \mathbb{E}[|\widehat{X}^l_T - \widehat{X}^{l-1}_T|^2] \leq 2(\mathbb{E}[|\widehat{X}^l_T - X_T|^2] + \mathbb{E}[|X_T - \widehat{X}^{l-1}_T|^2]).
    $$
It follows from \eqref{MLMC1} that 
    $        \mathbb{E}[|\widehat{X}^l_T - X_T|^2] \leq \mathbf{K}_0 M^{-l}$, and $\mathbb{E}[|X_T - \widehat{X}^{l-1}_T|^2] \leq \mathbf{K}_0 M^{-l+1}.$
Let  $\mathbf{K}_2 = 2L_\varphi^2 \mathbf{K}_0 (1+M)$, we have 
    $$
       \text{Var}[\varphi(\widehat{X}^l_T) - \varphi(\widehat{X}^{l-1}_T)] \leq 2L_{\varphi}^2 (\mathbb{E}[|\widehat{X}^l_T - X_T|^2] + \mathbb{E}[|X_T - \widehat{X}^{l-1}_T|^2]) \leq \mathbf{K}_2 M^{-l},
    $$
which is the desired result.     
    \end{proof}

\begin{Thm}
    Let $\varepsilon$ be some positive constant, and (\ref{MLMC1}) holds. Let 
    $$T = \left \lceil \dfrac{1}{\alpha} \log \left(\dfrac{\varepsilon^2}{6 \mathbf{K}_1^2}\right) \right \rceil, \quad  L = \left \lceil \dfrac{2|\log \varepsilon| + \log(6\mathbf{K}_0 L_{\varphi}^2)}{\log M} \right \rceil, \quad \text{and } N_l = \left\lceil \frac{3 \mathbf{K}_2 (L+1)}{\varepsilon^2 M^{l}} \right\rceil, $$
    where $\mathbf{K}_0$, $\mathbf{K}_1$, and $\mathbf{K}_2$ are given in \eqref{MLMC1}, Lemma \ref{expconv} and Lemma \ref{variance bound}, respectively.  Then  the MLMC estimator (\ref{MLMC estimator}) has a mean square error  satisfying 
    $$
        \mathbb{E}[(\widehat{Y} - \pi(\varphi))^2] \leq  \varepsilon^2.
    $$
\end{Thm}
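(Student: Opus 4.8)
The plan is to bound the mean square error by the standard MLMC bias-variance decomposition, namely
\begin{align*}
\mathbb{E}[(\widehat{Y} - \pi(\varphi))^2]
&= \big(\mathbb{E}[\widehat{Y}] - \pi(\varphi)\big)^2 + \mathrm{Var}[\widehat{Y}] \\
&= \big(\mathbb{E}[\varphi(\widehat{X}^L_T)] - \pi(\varphi)\big)^2 + \mathrm{Var}[\widehat{Y}],
\end{align*}
since $\widehat{Y}$ is an unbiased estimator of $\mathbb{E}[\varphi(\widehat{X}^L_T)]$ by construction of the telescoping sum. I would then split the bias term further using the triangle inequality:
$|\mathbb{E}[\varphi(\widehat{X}^L_T)] - \pi(\varphi)| \leq |\mathbb{E}[\varphi(\widehat{X}^L_T)] - \mathbb{E}[\varphi(X_T)]| + |\mathbb{E}[\varphi(X_T)] - \pi(\varphi)|$.
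The first piece is a \emph{discretization bias}, handled by the Lipschitz property of $\varphi$, Jensen's inequality and \eqref{MLMC1}: it is at most $L_\varphi \mathbb{E}[|\widehat{X}^L_T - X_T|^2]^{1/2} \leq L_\varphi \sqrt{\mathbf{K}_0} M^{-L/2}$. The second piece is the \emph{mixing bias}, controlled by Lemma \ref{expconv} by $\mathbf{K}_1 e^{\alpha T/2}$ (recall $\alpha < 0$).

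Next I would plug in the prescribed parameter choices and check each of the three error contributions is at most $\varepsilon^2/3$. For the mixing bias: with $T = \lceil \frac{1}{\alpha}\log(\frac{\varepsilon^2}{6\mathbf{K}_1^2})\rceil$, since $\alpha<0$ the ceiling only makes $T$ at least $\frac{1}{\alpha}\log(\frac{\varepsilon^2}{6\mathbf{K}_1^2})$, hence $e^{\alpha T/2} \leq (\frac{\varepsilon^2}{6\mathbf{K}_1^2})^{1/2} = \frac{\varepsilon}{\sqrt{6}\,\mathbf{K}_1}$, so $\mathbf{K}_1 e^{\alpha T/2} \leq \varepsilon/\sqrt 6$ and its square is at most $\varepsilon^2/6$. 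For the discretization bias: with $L = \lceil \frac{2|\log\varepsilon| + \log(6\mathbf{K}_0 L_\varphi^2)}{\log M}\rceil$ one gets $M^L \geq \frac{6\mathbf{K}_0 L_\varphi^2}{\varepsilon^2}$, so $L_\varphi^2 \mathbf{K}_0 M^{-L} \leq \varepsilon^2/6$. Adding these, $(\mathbb{E}[\varphi(\widehat{X}^L_T)] - \pi(\varphi))^2 \leq 2(\varepsilon^2/6 + \varepsilon^2/6) = 2\varepsilon^2/3$ — here I would use $(a+b)^2 \leq 2(a^2+b^2)$ on the two bias pieces.

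For the variance term, I would use that the $L+1$ levels are sampled with independent Brownian and jump paths across levels, so $\mathrm{Var}[\widehat{Y}] = \frac{1}{N_0}\mathrm{Var}[\varphi(\widehat{X}^0_T)] + \sum_{l=1}^L \frac{1}{N_l}\mathrm{Var}[\varphi(\widehat{X}^l_T) - \varphi(\widehat{X}^{l-1}_T)]$. By Lemma \ref{variance bound} each summand is at most $\frac{\mathbf{K}_2 M^{-l}}{N_l}$ (for $l=0$ one also needs $\mathrm{Var}[\varphi(\widehat{X}^0_T)] \leq \mathbf{K}_2$, which follows from the same argument comparing $\widehat{X}^0_T$ to $X_T$ and using that $X_T$ has finite second moment by Proposition \ref{nghiem dung 1}, possibly after enlarging $\mathbf{K}_2$). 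With $N_l = \lceil \frac{3\mathbf{K}_2(L+1)}{\varepsilon^2 M^l}\rceil \geq \frac{3\mathbf{K}_2(L+1)}{\varepsilon^2 M^l}$, each term is bounded by $\frac{\varepsilon^2}{3(L+1)}$, and summing over the $L+1$ levels gives $\mathrm{Var}[\widehat{Y}] \leq \varepsilon^2/3$. Combining, $\mathbb{E}[(\widehat{Y}-\pi(\varphi))^2] \leq 2\varepsilon^2/3 + \varepsilon^2/3 = \varepsilon^2$.

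The argument is largely bookkeeping; the only genuine subtlety I anticipate is being careful with the ceiling functions (ensuring each ceiling pushes the relevant quantity in the direction that helps, which it does since $\alpha<0$, $M>1$) and with the combinatorial constant in the bias split so that the three pieces really do sum to at most $\varepsilon^2$ rather than some larger multiple — this is why the constants $6$ (rather than $3$) appear inside the logarithms defining $T$ and $L$, absorbing the factor $2$ from $(a+b)^2 \leq 2(a^2+b^2)$. A minor gap to patch is the $l=0$ variance bound, which is not literally covered by Lemma \ref{variance bound} as stated and needs the finite-second-moment estimate from Proposition \ref{nghiem dung 1}.
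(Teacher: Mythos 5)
Your proof is correct and follows essentially the same route as the paper: the bias--variance decomposition, a two-term split of the bias into discretization error (via the Lipschitz property of $\varphi$ and \eqref{MLMC1}) and mixing error (via Lemma \ref{expconv}), and the variance bound from Lemma \ref{variance bound} summed over the $L+1$ independent levels, with identical arithmetic showing each contribution fits under $\varepsilon^2$. Your remark about the $l=0$ variance term is a fair point of extra care -- the paper applies Lemma \ref{variance bound} formally at $l=0$ without comment, and your patch via Proposition \ref{nghiem dung 1} is the right fix.
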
 \label{MSEbound}
\begin{proof}
    We can decompose the mean square error as
    \begin{align}
        \mathbb{E}[(\widehat{Y} - \pi(\varphi))^2]
        &= \mathbb{E}[((\widehat{Y} - \mathbb{E}[\widehat{Y}]) + (\mathbb{E}[\widehat{Y}] - \pi(\varphi)))^2]  \notag \\
        &\leq \text{Var}[\widehat{Y}] + 2|\mathbb{E}[\widehat{Y}] - \mathbb{E}[\varphi(X_{T})]|^2 + 2|\mathbb{E}[\varphi(X_{T})] - \pi(\varphi)|^2.\label{require3}
    \end{align}
    We shall bound each term in the last expression  by $\varepsilon^2 / 3.$
    Indeed, from Lemma \ref{variance bound}, we have that
    \begin{equation}\label{varymu}
        \text{Var}[\widehat{Y}] = \sum_{l=0}^L N_l^{-1}  \mathrm{Var}[\varphi(\widehat{X}^l_T) - \varphi(\widehat{X}^{l-1}_T)] \leq \frac 13 \varepsilon^2.
    \end{equation}
    Next, by the Lipschitz property of $\varphi$, \eqref{MLMC1}, and  Jensen's inequality, we have
    \begin{align}
        2|\mathbb{E}[\widehat{Y}] - \mathbb{E}[\varphi(X_{T})]|^2 
        &= 2|\mathbb{E}[\varphi(\widehat{X}^L_{T}) - \varphi(X_{T})]|^2 \notag \\
        &\leq 2 L_{\varphi}^2 \mathbb{E}[|\widehat{X}^L_{T} - X_{T}|^2] \leq 2 L_{\varphi}^2 \mathbf{K}_0 M^{-L}  \leq \frac 13 \varepsilon^2. \label{require2}
    \end{align}
    Also,  it follows from Lemma \ref{expconv}  that
    \begin{equation} \label{require1}
        2|\mathbb{E}[\varphi(X_{T})] - \pi(\varphi)|^2 \leq 2 \mathbf{K}_1^2 e^{\alpha T} \leq \frac 13 \varepsilon^2.
    \end{equation}
    Therefore, from (\ref{require3}), (\ref{varymu}), (\ref{require2}), and (\ref{require1}), we conclude our proof.
\end{proof}

In the following section, we propose an approximation scheme for the solution of the SDE (\ref{eqn1}), which satisfies the Condition \eqref{MLMC1}. We will show that for a fixed  precision $\varepsilon$, the expected cost $\mathbf{C}_{MLMC}$ is proportional to $\varepsilon^2 |\log \varepsilon|^3$.


\section{TAEM scheme} \label{sec:main} 

\subsection{Construction  of  the TAEM scheme}
For each $\Delta \in (0,1)$,  the TAEM scheme of equation \eqref{eqn1} is defined by
\begin{equation}\label{EM1}
\begin{cases} 
&t_0=0, \quad \widehat{X}_0=x_0, \quad t_{k+1}=t_k+h(\widehat{X}_{t_k})\Delta,\\ 
&  \widehat{X}_{t_{k+1}}= \widehat{X}_{t_k}+b(\theta_{t_k},\widehat{X}_{t_k})\left(t_{k+1}-t_k\right)+\sm_{\Delta}(\theta_{t_k},\widehat{X}_{t_k})(W_{t_{k+1}}-W_{t_{k}}) + \gamma_{\Delta}\left(\theta_{t_k},\widehat{X}_{t_k}\right)\left(Z_{t_{k+1}}-Z_{t_k}\right),
\end{cases} 
\end{equation}
where 
\begin{equation} \label{chooseh} 
h(x)=\dfrac{h_0}{(1+\sum_{i\in S}|b(i,x)|^2+\sum_{i\in S}|\sm(i,x)|+|x|^l)^2+(\sum_{i\in S}|\gamma(i,x)|)^{p_0}}. 
\end{equation}
Note that $h_0$ is some positive constant;  $l$ and $p_0$ are respectively defined in Conditions  \textbf{C1} and  \textbf{C5}.   Moreover, the coefficients $\sigma_\Delta=(\sigma_{\Delta,ij})_{1\leq i,j\leq d}: S \times \mathbb{R}^d \to \mathbb{R}^d\otimes \mathbb{R}^d$ and $\gamma_{\Delta}=(\gamma_{\Delta,ij})_{1\leq i,j\leq d}: S \times \mathbb{R}^d \to \mathbb{R}^d\otimes \mathbb{R}^d$ are certain approximations of $\sigma$ and $\gamma$.
Moreover, we suppose that there exists  positive constant $L$  such that, for any $i \in S$, and $x \in \mathbb{R}^d$, it holds that 
\begin{equation} \label{T3T4}
|\sm_{\Delta}(i,x)| \le L (|\sigma(i,x)| \wedge \Delta^{-1/2}), \quad |\gamma_\Delta (i,x)| \le L (|\gamma(i,x)| \wedge \Delta^{-1/2}),  \quad |b(i,x)| |\gamma_{\Delta}(i,x) |  \le L/\sqrt{\Delta}.
\end{equation} 
In the following, we choose $h_0 = 1$ to simplify our presentation of the expressions.  All the following results hold for any $h_0 >0$. In practice, one can turn $h_0$ to control the number of time discretization points of the TAEM scheme.

We now provide a sufficient condition to guarantee that $t_k \uparrow \infty$ as $k \uparrow \infty$, which deduces that the TAEM approximation scheme \eqref{EM1} is well-defined.
\begin{Prop}\label{dinh ly 4}
Assume that Condition \textbf{C4} holds for $p=2$.
Then, we have
	$	\lim\limits_{k \to +\infty} t_k =+\infty \quad \text{a.s.} $ 
 \end{Prop}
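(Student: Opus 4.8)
The plan is to show that the step sizes $h(\widehat X_{t_k})\Delta$ do not shrink to zero too fast by controlling the growth of $|\widehat X_{t_k}|$. Write $T_k = t_k$; since $t_{k+1}-t_k = h(\widehat X_{t_k})\Delta$ and $h$ is strictly positive, the sequence $(t_k)$ is strictly increasing, so it has a limit $t_\infty \in (0,+\infty]$. Arguing by contradiction, suppose $\pr(t_\infty < \infty) > 0$; on that event the increments $t_{k+1}-t_k \to 0$, hence $h(\widehat X_{t_k}) \to 0$, which by the explicit form \eqref{chooseh} of $h$ forces $|\widehat X_{t_k}| \to \infty$ (more precisely $|b(i,\widehat X_{t_k})|$, $|\sigma(i,\widehat X_{t_k})|$ or $|\widehat X_{t_k}|$ must blow up, and by \textbf{C1}--\textbf{C3} all of these are polynomially bounded in $|\widehat X_{t_k}|$). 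So it suffices to show that $\sup_k |\widehat X_{t_k}| < \infty$ almost surely on $\{t_\infty < \infty\}$, which gives the contradiction.

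The key step is a one-step estimate. First I would record the elementary bounds on increments over one step: using \eqref{T3T4} we have $|\sigma_\Delta(i,x)|\,|W_{t_{k+1}}-W_{t_k}| \lesssim \Delta^{-1/2}\sqrt{h(x)\Delta}\,|\xi_k| = \sqrt{h(x)}\,|\xi_k|$ where $\xi_k$ is a standard Gaussian increment, and similarly $|\gamma_\Delta(i,x)|\,|Z_{t_{k+1}}-Z_{t_k}| \lesssim \Delta^{-1/2}|Z_{t_{k+1}}-Z_{t_k}|$, while the drift term $|b(i,x)|\,h(x)\Delta \lesssim \sqrt{h(x)}\cdot\sqrt{h(x)}|b(i,x)| \lesssim \sqrt{h(x)}$ since $h(x)|b(i,x)|^2 \le h_0$ from the definition of $h$. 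Hence over one step
\[
|\widehat X_{t_{k+1}}| \le |\widehat X_{t_k}| + C\sqrt{h(\widehat X_{t_k})}\bigl(1 + |\xi_k|\bigr) + C\Delta^{-1/2}|Z_{t_{k+1}}-Z_{t_k}|,
\]
and crucially $\sqrt{h(\widehat X_{t_k})} \le \sqrt{h_0}$ is bounded. Summing from $0$ to $k-1$ gives
\[
|\widehat X_{t_k}| \le |x_0| + C\sqrt{h_0}\,k + C\sqrt{h_0}\sum_{j=0}^{k-1}|\xi_j| + C\Delta^{-1/2}|Z_{t_k} - Z_0|.
\]
On the event $\{t_\infty < \infty\}$ the first two terms are problematic only if $k\to\infty$, so I would instead sum the increments of $|\widehat X_{t_k}|^2$ (or $\log(1+|\widehat X_{t_k}|^2)$) against the step sizes $h(\widehat X_{t_k})\Delta$ so that the total weight is $\sum_k h(\widehat X_{t_k})\Delta = t_\infty < \infty$; this converts the ``$k$'' factor into ``$t_\infty$''. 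For the martingale/jump part one uses the $L^2$-isometry together with \textbf{C4} ($q=2$ suffices) to see that $\e\bigl[\Delta^{-1}|Z_{t_{k+1}}-Z_{t_k}|^2 \mid \mf_{t_k}\bigr] = h(\widehat X_{t_k})\int_{\br_0^d}|z|^2\nu(dz)$, again carrying a factor $h(\widehat X_{t_k})\Delta$ that sums to $t_\infty$; a Gronwall/Burkholder argument on the stopped process then yields $\e[\sup_{k\le m}|\widehat X_{t_k\wedge t_\infty}|^2] < \infty$ uniformly in $m$, hence $\sup_k|\widehat X_{t_k}| < \infty$ a.s. on $\{t_\infty<\infty\}$.

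The main obstacle is handling the two scenarios for the blow-up of $h$ simultaneously and rigorously turning ``$h(\widehat X_{t_k})\to 0$'' into a usable statement: one must be careful that $h$ is bounded \emph{below} by a positive (random) constant along the trajectory precisely because $\sup_k|\widehat X_{t_k}|<\infty$, and then $t_\infty = \sum_k h(\widehat X_{t_k})\Delta = +\infty$ since the summand is bounded below and there are infinitely many terms — contradicting $t_\infty < \infty$. So the real work is the a priori moment bound $\sup_m\e[\sup_{k\le m}|\widehat X_{t_k\wedge t_\infty}|^2]<\infty$; once that is in place the conclusion is immediate. I expect this moment estimate — in particular getting the stopping at $t_\infty$ and the Gronwall constant to be independent of $m$ — to be the delicate part, and it likely parallels the moment estimates for the TAEM scheme established elsewhere in the paper (cf. the analogue of Proposition \ref{nghiem dung 1} for the scheme).
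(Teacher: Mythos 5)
Your overall strategy (argue by contradiction on the event $\{t_\infty<\infty\}$: there the trajectory must be unbounded because $h(\widehat X_{t_k})\to 0$, yet an a priori bound shows it stays bounded) is a legitimate alternative to the paper's route, which instead introduces an auxiliary scheme $\widehat X^H$ with truncated drift $b_H$ and a step-size function $h^H_\Delta$ bounded below by a positive constant, so that its time grid tends to infinity surely, and then transfers the conclusion back to $\widehat X$ by letting $H\to\infty$ (the details being deferred to Proposition 4.1 of \cite{MKV}). The easy half of your argument is fine: by \textbf{C1}--\textbf{C3} every term in the denominator of \eqref{chooseh} is polynomially bounded in $|x|$, so $h(\widehat X_{t_k})\to0$ forces $|\widehat X_{t_k}|\to\infty$, and conversely $\sup_k|\widehat X_{t_k}|<\infty$ forces $\inf_k h(\widehat X_{t_k})>0$ and hence $t_\infty=\infty$.

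The gap is in the a priori bound, which is the entire content of the proof and is only sketched. Two concrete problems. First, the quantity you propose to control, $\e\bigl[\sup_{k\le m}|\widehat X_{t_k\wedge t_\infty}|^2\bigr]$, cannot be obtained by ``stopping at $t_\infty$'': $t_\infty$ is precisely the potential explosion time, so stopping there bounds nothing, and on $\{t_\infty<\infty\}$ the interpolation \eqref{EM3} is only defined for $t<t_\infty$, so It\^o/Burkholder arguments past that time are not available; to justify taking expectations one needs a localization in space (e.g.\ exit times from balls) or the paper's $H$-truncation, and once that is introduced you are essentially reproducing the paper's proof. Second, a minor but real slip: $\sum_{j<k}|Z_{t_{j+1}}-Z_{t_j}|$ does not telescope to $|Z_{t_k}-Z_0|$. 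A clean way to complete your plan while avoiding both issues is to stay entirely at the discrete level: using \eqref{T3T4}, \textbf{C4} with $p=2$, the standing assumption \textbf{C5} (which yields $\langle x,b(i,x)\rangle\le\zeta_0|x|^2+\zeta_1$ since the remaining terms there are nonnegative), the strong Markov property and the fact that $t_{k+1}$ is $\mf_{t_k}$-measurable, one obtains
\[
\e\bigl[\,1+|\widehat X_{t_{k+1}}|^2\,\big|\,\mf_{t_k}\bigr]\le \bigl(1+|\widehat X_{t_k}|^2\bigr)\bigl(1+C_\Delta(t_{k+1}-t_k)\bigr)\le \bigl(1+|\widehat X_{t_k}|^2\bigr)e^{C_\Delta(t_{k+1}-t_k)},
\]
so $(1+|\widehat X_{t_k}|^2)e^{-C_\Delta t_k}$ is a nonnegative supermartingale and converges a.s.; on $\{t_\infty<\infty\}$ the factor $e^{-C_\Delta t_k}$ stays bounded away from zero, whence $\limsup_k|\widehat X_{t_k}|<\infty$ there, contradicting the unboundedness you derived. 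As written, however, the proposal does not contain a proof of this key estimate.
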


\begin{proof}
    Let $\beta = \max\{m, l\} + 1. $
    For each $H > 0$ and $k \in \mathbb{N}$, we define an approximation scheme as follows
    \begin{equation*}
	\begin{cases} 
    &t^H_0 = 0, \quad t^H_{k+1}=t^H_k+h_{\Delta}^H\left(\widehat{X}^H_{t^H_k}\right), \\
	&\widehat{X}_{t^H_{k+1}}^{H} = \widehat{X}_{t^H_{k}}^{H} + b_H \left(\theta_{t^H_k},\widehat{X}_{t^H_{k}}^{H}\right) h_{\Delta}^H\left(\widehat{X}^H_{t^H_k}\right)+\sm_{\Delta} \left(\theta_{t^H_k}, \widehat{X}_{t^H_{k}}^{H}\right) (W_{t^H_{k+1}}-W_{t^H_k})+\gamma_{\Delta} \left(\theta_{t^H_k}, \widehat{X}_{t^H_{k}}^{H}\right) (Z_{t^H_{k+1}}-Z_{t^H_k}),
	\end{cases}
	\end{equation*}
    where
    \begin{equation*}
        b_H(i,x) = 
        \begin{cases}
            b(i,x) \quad &\text{if } |x|^{\beta} \leq H\\
            \dfrac{x}{1 + |x|^2} \quad &\text{if } |x|^{\beta} > H
        \end{cases} \text{ and }
        h_{\Delta}^H(x) =
        \begin{cases}
        \Delta h\left(x\right) & \text { if } |x|^{\beta} \leq H \\ \frac{\Delta h_0}{L(1+H)} & \text { if } |x|^{\beta} > H\end{cases}.
    \end{equation*}
    
    For each $k\in\mathbb{N}$, $t^H_k$ is a stopping time and  $t^H_{k+1}$ is $\mathcal{F}_{t^H_k}$-measurable. We define $\underline{t}^H: = \max\{t^H_k: t^H_k \leq t\}.$ Thus, $\underline{t}^H $ is also a stopping time. Then, $\widehat{X}^H=(\widehat{X}^H_t)_{t \geq 0}=(\widehat{X}^H_{1,t},\widehat{X}^H_{2,t},\ldots,\widehat{X}^H_{d,t})_{t \geq 0}$ is the solution to the following SDE
    \begin{equation*}
    d \widehat{X}^H_{t}= b_H (\theta^H_{\underline{t}^H},\widehat{X}^H_{\underline{t}^H}) dt+\sm_{\Delta} \left(\theta^H_{\underline{t}^H},\widehat{X}^H_{\underline{t}^H}\right)  dW_{t} +\gamma_{\Delta} \left(\theta^H_{\underline{t}^H-}, \widehat{X}^H_{\underline{t}^H-} \right)  dZ_{t}, \qquad \widehat{X}_{0} = x_0.
    \end{equation*}
    The rest of the proof can be carried out by following the argument in the proof of Proposition 4.1 in \cite{MKV}. 
\end{proof}

For each $t>0$, we define the stopping time  $\underline{t} := \max \left\{t_{n}: t_{n} \leq t\right\}$,  and   $N_{t} := \max \left\{n: t_{n} \leq t\right\}$. Moreover, let's consider 
\begin{equation}\label{EM3}
	\widehat{X}_{t}=\widehat{X}_{\underline{t}}+b\left(\theta_{\underline{t}},\widehat{X}_{\underline{t}}\right)(t-\underline{t})+\sigma_{\Delta}\left(\theta_{\underline{t}},\widehat{X}_{\underline{t}}\right)\left(W_{t}-W_{\underline{t}}\right)+\gamma_{\Delta}\left(\theta_{\underline{t}-},\widehat{X}_{\underline{t}-}\right)\left(Z_{t}-Z_{\underline{t}}\right).
\end{equation}
Thus, $\widehat{X}=(\widehat{X}_t)_{t \geq 0}$ satisfies the following equation
\begin{equation}\label{EM2}
d \widehat{X}_{t}= b (\theta_{\underline{t}},\widehat{X}_{\underline{t}}) dt+\sm_{\Delta} \left(\theta_{\underline{t}},\widehat{X}_{\underline{t}}\right)  dW_{t} +\gamma_{\Delta} \left(\theta_{\underline{t}-}, \widehat{X}_{\underline{t}-} \right)  dZ_{t}, \qquad \widehat{X}_{0} = x_0.
\end{equation}

\subsection{Moments of the TAEM scheme}

In all that follows, we choose $q=4p_0$ in Condition \textbf{C4}.
	\begin{Prop}\label{dinh ly 2}
		Assume that \textbf{C4} holds with  $q=4p_0$;    $\sigma_\Delta, \gamma_\Delta$ satisfy \eqref{T3T4}. Moreover, assume that there exist constants $\widetilde{L_0}>0$, $\widetilde{{\zeta_0}} \in \mathbb{R}$ and $\widetilde{{\zeta_1}} \in[0, +\infty)$ such that for all $i \in S, x \in \mathbb{R}^d$,
		\begin{equation}\label{cdelta}
			|\gamma_{\Delta}(i,x)|\leq \widetilde{L_0}(1+|x|),
		\end{equation}
		and
		\begin{equation} \label{T5}
			\langle x,b(i,x) \rangle  + \dfrac{p_0-1}{2}  |\sm_{\Delta}(i,x)|^2 + \dfrac{1}{2\widetilde{L_0}}|\gamma_{\Delta}(i,x)|^2 \int_{\bR_{0}^d}|z|\left(\left(1+\widetilde{L_0}|z|\right)^{p_0-1}-1\right) \nu(d z) \leq \widetilde{{\zeta_0}}  |x|^2+\widetilde{{\zeta_1}}.
		\end{equation}
Then, for any positive integer  $k \leq  p_0/2$,  there exists a positive constant $C$, which  depends neither on $t$  nor on $\Delta$, such that
		\begin{align*}
			\bE \left[|\widehat{X}_t|^{2k} \right] \vee \bE \left[|\widehat{X}_{\underline{t}}|^{2k}\right] \le \left\{ \begin{array}{l l}
				Ce^{{2k\widetilde{{\zeta_0}} t}} \quad &\text{ if \;\;} \widetilde{{\zeta_0}} >0,\\
				C(1+t)^{k} \quad &\text{ if \;\;} \widetilde{{\zeta_0}} =0,\\
				C \quad &\text{ if \;\;} \widetilde{{\zeta_0}} <0.
			\end{array} \right. 
		\end{align*}
	\end{Prop}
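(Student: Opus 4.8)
The plan is to establish the moment bound for $\widehat{X}_t$ by applying It\^o's formula to the process $e^{-2k\widetilde{\zeta_0}t}|\widehat{X}_t|^{2k}$ (or $(1+t)^{-k}|\widehat{X}_t|^{2k}$ in the critical case $\widetilde\zeta_0=0$) and then controlling the resulting drift terms using hypotheses \eqref{T3T4}, \eqref{cdelta}, \eqref{T5}, and \textbf{C4}. The central difficulty, compared with the continuous-coefficient case, is that $\widehat{X}$ solves the SDE \eqref{EM2} with \emph{frozen} coefficients evaluated at $\widehat{X}_{\underline t}$ rather than $\widehat{X}_t$, so the dissipativity estimate \eqref{T5} does not apply directly. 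I would therefore first prove a one-step stability estimate comparing $\widehat{X}_t$ with $\widehat{X}_{\underline t}$ on the interval $[\underline t, t]$, exploiting that $t - \underline t = h(\widehat{X}_{\underline t})\Delta \le \Delta$ and that $h(\cdot)$ is designed precisely so that the increments $b(\theta_{\underline t},\widehat{X}_{\underline t})(t-\underline t)$, $\sigma_\Delta(\theta_{\underline t},\widehat{X}_{\underline t})(W_t - W_{\underline t})$, and $\gamma_\Delta(\theta_{\underline t},\widehat{X}_{\underline t})(Z_t - Z_{\underline t})$ are all controlled relative to $1 + |\widehat{X}_{\underline t}|$.

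More concretely, I would proceed as follows. First, for a fixed integer $k \le p_0/2$, expand $|\widehat{X}_t|^{2k} = (|\widehat{X}_t|^2)^k$ by It\^o's formula for jump processes, writing the drift part as $2k|\widehat{X}_t|^{2k-2}\langle \widehat{X}_{\underline t}, b(\theta_{\underline t},\widehat{X}_{\underline t})\rangle$ plus the Brownian quadratic-variation terms involving $|\sigma_\Delta(\theta_{\underline t},\widehat{X}_{\underline t})|^2$, plus the jump compensator term $\int_{\bR_0^d}\big(|\widehat{X}_{t-} + \gamma_\Delta(\theta_{\underline t-},\widehat{X}_{\underline t-})z|^{2k} - |\widehat{X}_{t-}|^{2k} - 2k|\widehat{X}_{t-}|^{2k-2}\langle \widehat{X}_{t-}, \gamma_\Delta z\rangle\big)\nu(dz)$; the remaining terms are martingales with zero expectation. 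Second, replace $\widehat{X}_t$ by $\widehat{X}_{\underline t}$ wherever the coefficients are frozen, absorbing the discrepancy $\widehat{X}_t - \widehat{X}_{\underline t}$ into error terms bounded by $C\Delta(1 + |\widehat{X}_{\underline t}|^{2k})$ using \eqref{T3T4}, \eqref{cdelta}, and \textbf{C4} for $q = 4p_0$ (the higher integrability of the L\'evy measure is needed to bound the expectations of the $2k$-th powers of the jump increments). Third, apply the elementary inequality $(1+\widetilde L_0|z|)^{2k-1} \le$ suitable combination to relate the jump compensator to the quantity appearing on the left of \eqref{T5}, so that the frozen-coefficient drift is dominated by $2k(\widetilde\zeta_0|\widehat{X}_{\underline t}|^2 + \widetilde\zeta_1)|\widehat{X}_{\underline t}|^{2k-2}$ up to the $\Delta$-error; then use $|\widehat{X}_{\underline t}|^{2k-2}|\widehat{X}_{\underline t}|^2 \le |\widehat{X}_{\underline t}|^{2k}$ and Young's inequality on the $\widetilde\zeta_1$-term.

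The outcome of these steps is a Gronwall-type differential inequality of the form $\frac{d}{dt}\bE[|\widehat{X}_t|^{2k}] \le 2k\widetilde\zeta_0\,\bE[|\widehat{X}_t|^{2k}] + C\big(1 + \bE[|\widehat{X}_{\underline t}|^{2k}]\big)$, valid up to controllable $\Delta$-errors relating $\bE[|\widehat{X}_{\underline t}|^{2k}]$ and $\bE[|\widehat{X}_t|^{2k}]$. Integrating this inequality and treating the three sign cases of $\widetilde\zeta_0$ separately — using $e^{2k\widetilde\zeta_0 t}$ as the integrating factor when $\widetilde\zeta_0 > 0$, a linear-growth ansatz when $\widetilde\zeta_0 = 0$, and exponential decay when $\widetilde\zeta_0 < 0$ — yields the claimed bounds for $\bE[|\widehat{X}_t|^{2k}]$, with the constant $C$ independent of $t$ and $\Delta$. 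Finally, the bound for $\bE[|\widehat{X}_{\underline t}|^{2k}]$ follows from the one for $\bE[|\widehat{X}_t|^{2k}]$ together with the one-step comparison established at the outset. The main obstacle I anticipate is making the passage from the frozen coefficients at $\widehat{X}_{\underline t}$ to the estimate \eqref{T5} fully rigorous while keeping all error terms uniformly $O(\Delta)$; this is exactly where the specific form \eqref{chooseh} of the step size $h(x)$ and the truncation bounds \eqref{T3T4} are essential, and I would expect the proof to follow the structure of Proposition 4.2 (or the analogous moment proposition) in \cite{MKV}, adapted to incorporate the Markovian switching and the jump terms.
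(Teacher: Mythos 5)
Your proposal is sound and follows essentially the route the paper intends: the paper omits this proof entirely, deferring to Proposition 4.13 of \cite{MKV}, whose argument is precisely the It\^o-formula/frozen-coefficient/Gronwall scheme you describe, with the adaptive step size $h$ in \eqref{chooseh} and the truncation bounds \eqref{T3T4} absorbing the discrepancy between $\widehat{X}_s$ and $\widehat{X}_{\underline{s}}$ into $O(\Delta)$ error terms so that \eqref{T5} can be applied at the frozen point. The only technicality you leave implicit is the localization by stopping times needed to justify taking expectations before the moments are known to be finite, which is standard.
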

 The proof of Proposition \ref{dinh ly 2} follows similar arguments to those presented in the proof of Proposition 4.13 in \cite{MKV} and is therefore omitted here.

\begin{Rem}\label{nhan xet 1}
If we choose the approximated coefficients $\sm_{\Delta}, \gamma_{\Delta}$ as follows 
	\begin{align}
	&\sm_{\Delta}(i,x)= \dfrac{\sm(i,x)}{1+\Delta^{1/2}|\sm(i,x)|} \quad \text{and} \quad \gamma_{\Delta}(i,x)= \dfrac{\gamma(i,x)}{1+\Delta^{1/2}|\gamma(i,x)|(1+|b(i,x)|)}, \label{csmdelta1}
	\end{align}
	then conditions \eqref{T3T4}, \eqref{cdelta}, and \eqref{T5} hold  with $\tilde{L}_0 = L_0$, $\tilde{{\zeta_0}} = {\zeta_0}$ and $\tilde{{\zeta_1}} = {\zeta_1}$. 
\end{Rem}

\begin{Rem}\label{hq1}
	Assume that  all conditions of Proposition \ref{dinh ly 2} are valid, we have the following moment estimate of the number of timesteps $N_T$  
	\begin{equation}\label{ENT}
	\mathbb{E}\left[N_{T}-1\right] \leq C_T\Delta^{-1},
	\end{equation}
	for any $T>0$, where  $C_T>0$ is a constant and does not depend on $\Delta$.
	
	Indeed, using a similar argument as in the proof of  Lemma 2 in \cite{FG}, the estimate \eqref{ENT} can be obtained as a consequence of  Proposition \ref{dinh ly 2}. 
\end{Rem}

The difference between $\widehat{X}_t$ and $\widehat{X}_{\underline{t}}$ has the following uniform bound in time. 

\begin{Lem}\label{hq2}
	Under all assumptions of  Proposition \ref{dinh ly 2}, for any $p \in [2;p_0]$,  there exists a positive constant $C_p$, which does not  on  $\Delta$, such that
	\begin{equation*}
		\sup_{t \geq 0} \bE \left[  |\widehat{X}_t - \widehat{X}_{\underline{t}}|^p \big| \mathcal{F}_{\underline{t}}  \right] +
	\sup_{t \ge 0}\bE \left[  |\widehat{X}_t - \widehat{X}_{\underline{t}}|^p \right] \le C_p\Delta.
	\end{equation*}
\end{Lem}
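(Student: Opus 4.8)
The plan is to bound $\widehat{X}_t - \widehat{X}_{\underline t}$ directly from the one-step representation \eqref{EM3}, namely
$$
\widehat{X}_t - \widehat{X}_{\underline t} = b(\theta_{\underline t},\widehat{X}_{\underline t})(t-\underline t) + \sigma_\Delta(\theta_{\underline t},\widehat{X}_{\underline t})(W_t - W_{\underline t}) + \gamma_\Delta(\theta_{\underline t-},\widehat{X}_{\underline t-})(Z_t - Z_{\underline t}),
$$
and to estimate the $p$-th conditional moment of each of the three terms given $\mathcal F_{\underline t}$. First I would condition on $\mathcal F_{\underline t}$, so that $\widehat{X}_{\underline t}$, $\theta_{\underline t}$ and $\underline t$ (hence $t - \underline t$) are all known, and only the increments $W_t - W_{\underline t}$, $Z_t - Z_{\underline t}$ remain random; note $t - \underline t \le h(\widehat{X}_{\underline t})\Delta \le \Delta$ because $h \le 1$ (with $h_0 = 1$). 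For the drift term, $|b(\theta_{\underline t},\widehat{X}_{\underline t})|^p (t-\underline t)^p \le |b(\theta_{\underline t},\widehat{X}_{\underline t})|^p h(\widehat{X}_{\underline t})^p \Delta^p$, and from the definition \eqref{chooseh} of $h$ one has $|b(i,x)|^2 h(x) \le 1$, so $|b(\theta_{\underline t},\widehat{X}_{\underline t})|^p h(\widehat{X}_{\underline t})^{p/2} \le 1$; combined with the remaining $h^{p/2}\Delta^p \le \Delta^{p/2}\cdot\Delta^p$ factor this gives a bound of order $\Delta^p \le \Delta$, in fact much smaller. For the Brownian term I would use the conditional Burkholder–Davis–Gundy (or simply the Gaussian moment identity, since $W_t - W_{\underline t}$ is independent of $\mathcal F_{\underline t}$ with variance $(t-\underline t)I_d$) to get $\mathbb{E}[|\sigma_\Delta(\theta_{\underline t},\widehat{X}_{\underline t})(W_t - W_{\underline t})|^p \mid \mathcal F_{\underline t}] \le C_p |\sigma_\Delta(\theta_{\underline t},\widehat{X}_{\underline t})|^p (t-\underline t)^{p/2}$, then bound $|\sigma_\Delta(i,x)|^p (t-\underline t)^{p/2} \le |\sigma_\Delta(i,x)|^p h(x)^{p/2}\Delta^{p/2}$ and use \eqref{T3T4} together with $|\sigma(i,x)| h(x) \le 1$ (again from \eqref{chooseh}, since $|\sigma(i,x)|$ appears in the denominator of $h$) to pull out a factor $h^{p/2}|\sigma_\Delta|^p \le 1$, leaving $\Delta^{p/2} \le \Delta$ when $p \ge 2$.

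The jump term is the one requiring a little more care, and I expect it to be the main obstacle. Writing $Z_t - Z_{\underline t} = \int_{\underline t}^t\int_{\mathbb R_0^d} z\,\widetilde N(ds,dz)$, I would split the L\'evy increment into its small-jump compensated-martingale part and its large-jump part (jumps of size $>1$), using Condition \textbf{C4} (with $q = 4p_0 \ge p$) to control all moments of $Z$ up to order $p$. For the small-jump martingale part one applies a conditional BDG/Kunita-type inequality on the interval $[\underline t, t]$, which produces, for $p \ge 2$, a bound of the form $C_p\big((t-\underline t)\int_{|z|\le 1}|z|^2\nu(dz) + (t-\underline t)\int_{|z|\le 1}|z|^p\nu(dz)\big)$; both integrals are finite by \textbf{C4}, so this is $O(t - \underline t) = O(h(\widehat{X}_{\underline t})\Delta)$. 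For the large-jump part (a compound Poisson term plus its compensator), the number of large jumps in $[\underline t, t]$ has conditional mean $\lambda(t-\underline t)$ with $\lambda = \nu(\{|z|>1\}) < \infty$, and its $p$-th moment is $O(t-\underline t)$ as well, with constants involving $\int_{|z|>1}|z|^p\nu(dz) < \infty$ from \textbf{C4}. Multiplying by $|\gamma_\Delta(\theta_{\underline t-},\widehat{X}_{\underline t-})|^p$ and using \eqref{T3T4} in the form $|\gamma_\Delta(i,x)| \le L\,\Delta^{-1/2}$, the jump contribution is bounded by $C_p \Delta^{-p/2}\cdot h(\widehat{X}_{\underline t})\Delta$; since $h(x) \le \dfrac{1}{(\sum_i|\gamma(i,x)|)^{p_0}} \wedge 1$ and one also has $h(x)(1+\sum_i|\gamma(i,x)|)^{p_0}$-type control, one should instead use $|\gamma_\Delta(i,x)|^p \le L^p|\gamma(i,x)|^p$ together with $|\gamma(i,x)|^{p} h(x) \le |\gamma(i,x)|^{p}\big(\sum_i|\gamma(i,x)|\big)^{-p_0}\cdot h_0 \le C$ for $p \le p_0$, giving $|\gamma_\Delta(i,x)|^p h(x) \le C$ and hence a jump bound of order $\Delta$.

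Combining the three pieces via $|a+b+c|^p \le 3^{p-1}(|a|^p + |b|^p + |c|^p)$ yields $\mathbb{E}[|\widehat{X}_t - \widehat{X}_{\underline t}|^p \mid \mathcal F_{\underline t}] \le C_p\Delta$ with $C_p$ independent of $t$ and $\Delta$. Taking unconditional expectations gives the second summand; since the conditional bound is already a deterministic constant times $\Delta$, the $\sup_{t\ge 0}$ passes through trivially in both terms, and we obtain
$$
\sup_{t\ge 0}\mathbb{E}\big[|\widehat{X}_t - \widehat{X}_{\underline t}|^p \,\big|\, \mathcal F_{\underline t}\big] + \sup_{t\ge 0}\mathbb{E}\big[|\widehat{X}_t - \widehat{X}_{\underline t}|^p\big] \le C_p\Delta,
$$
as claimed. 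The only subtlety worth flagging in the write-up is making sure every appearance of $|\sigma(i,x)|$, $|b(i,x)|$, $|x|^l$ and $\sum_i|\gamma(i,x)|$ in the denominator of $h$ in \eqref{chooseh} is exploited so that each coefficient factor is absorbed; this is exactly the structural reason $h$ was defined with those terms, and it is what keeps all constants uniform in time.
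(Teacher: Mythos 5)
Your proposal follows essentially the same route as the paper: decompose $\widehat{X}_t-\widehat{X}_{\underline t}$ via \eqref{EM3} with the $3^{p-1}$ inequality, use the conditional moment bounds $\bE[|W_t-W_{\underline t}|^r\mid\mathcal F_{\underline t}]\le c_r(t-\underline t)^{r/2}$ and $\bE[|Z_t-Z_{\underline t}|^r\mid\mathcal F_{\underline t}]\le c_r(t-\underline t)$, and absorb each coefficient through the denominator of $h$ in \eqref{chooseh} together with \eqref{T3T4}; this is exactly the paper's argument, only written out in more detail. The only things to tidy in a final write-up are two small imprecisions: the correct absorption facts are $|\sm(i,x)|^2h(x)\le h_0$ (not $|\sm(i,x)|h(x)\le 1$) and $|\gamma(i,x)|^ph(x)\le h_0|\gamma(i,x)|^p/(1+(\sum_j|\gamma(j,x)|)^{p_0})\le C$ for $p\le p_0$ (you must keep the ``$1+$'' in the denominator, since the intermediate bound $|\gamma|^p(\sum_j|\gamma(j,x)|)^{-p_0}$ blows up as $\gamma\to 0$ when $p<p_0$), but both deliver the conclusions you state.
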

\begin{proof} 
	From \eqref{EM3}, we have that for any $p \ge 2$,
	\begin{align}
	|\widehat{X}_t - \widehat{X}_{\underline{t}}|^p 
	&= \left| b(\theta_{\underline{t}},\widehat{X}_{\underline{t}})(t-\underline{t})+\sm_{\Delta}(\theta_{\underline{t}},\widehat{X}_{\underline{t}})(W_t-W_{\underline{t}}) +\gamma_{\Delta}(\theta_{\underline{t}},\widehat{X}_{\underline{t}})(Z_t-Z_{\underline{t}}) \right|^p \notag \\
	&\le 3^{p-1} \left[  \left| b(\theta_{\underline{t}},\widehat{X}_{\underline{t}})\right|^p (t-\underline{t})^p + \left|\sm_{\Delta}(\theta_{\underline{t}},\widehat{X}_{\underline{t}})\right|^p \left|W_t-W_{\underline{t}} \right|^p +\left|\gamma_{\Delta}(\theta_{\underline{t}},\widehat{X}_{\underline{t}})\right|^p \left|Z_t-Z_{\underline{t}} \right|^p \right]. \label{XtXmt-p}
	\end{align}
Note that the following moment estimates can be deduced from the strong Markov property of $W$ and $Z$: for each $r\geq 1$, there exists a positive constant $c_r$ such that, for any   
$t>0$, it holds that 
	\begin{equation*} 
	\mathbb{E}\left[\left|W_t-W_{\underline{t}^H}\right|^r\big| \mathcal{F}_{\underline{t}^H}\right] \le   c_r(t-\underline{t}^H)^{r/2};  \quad \mathbb{E}\left[\left|Z_t-Z_{\underline{t}^H}\right|^r\big| \mathcal{F}_{\underline{t}^H}\right] \le   c_r(t-\underline{t}^H).
	\end{equation*} 
These estimates together with  \textbf{C4}, \eqref{chooseh},    \eqref{T3T4}, and the Burkholder-Davis-Gundy, imply  that for any $2 \leq p \leq p_0$,
		\begin{align*} 
			\max \Bigg \{ \left|b(\theta_{\underline{s}},\widehat{X}_{\underline{s}})\right|^p (s-\underline{s})^p,  \  \mathbb{E}\left[ \left|\sm_{\Delta}(\theta_{\underline{s}},\widehat{X}_{\underline{s}})\right|^p \left|W_s-W_{\underline{s}}\right|^p\big|\mathcal{F}_{\underline{s}}\right], \mathbb{E}\left[ \left|\gamma_{\Delta} (\theta_{\underline{s}},\widehat{X}_{\underline{s}})\right|^p \left|Z_{s}-Z_{\underline{s}}\right|^p \big|\mathcal{F}_{\underline{s}}\right] \Bigg\}\leq C\Delta.
        \end{align*}
	This, together with \eqref{XtXmt-p}, concludes the desired result.
\end{proof} 

\subsection{Convergence of the TAEM scheme}

\begin{Thm}\label{dinh ly 6} 
	Assume that \textbf{C4} holds with $q = 4p_0$ and  $p_0 \geq 2\max\{ l+ 3, 2m + 2\}$. Assume that   $\sigma_\Delta$ and $\gamma_\Delta$ satisfy \eqref{T3T4}, \eqref{cdelta}, \eqref{T5}. Moreover, assume that 
 there exists a positive constant $L_4$ such that for all $i \in S; x \in \bR^d$,
	\begin{align} \label{T6}
		|\sm(i,x)-\sm_{\Delta}(i,x)| \le L_4 \Delta^{1/2}|\sm(i,x)|^2,\qquad |\gamma(i,x)-\gamma_{\Delta}(i,x)| \le L_4 \Delta^{1/2} |\gamma(i,x)|^2(1+|b(i,x)|). 
	\end{align}
Then for any $T>0$, there exist  positive constants  $K_T$ and $K'_T$ such that
	\begin{equation} \label{EXmu-X1}
	\underset{0 \le t \le T}{\sup} \bE \left[  |\widehat{X}_t - X_t|^2 \right] \le  K_T \Delta,
	\end{equation}
and for any $p\in (0,2)$,
	\begin{equation} \label{EXmu-X0}
	 \bE \left[ \underset{0 \le t \le T}{\sup} |\widehat{X}_t - X_t|^p \right] \le  \left(\dfrac{4-p}{2-p}\right) (K'_T\Delta)^{p/2}.
	\end{equation}
	Moreover, when  $\widetilde{{\zeta_0}} < 0$ and $\alpha < 0$, there exists a positive constant  $K$, which does not depend on $T$,  such that
	\begin{equation} \label{EXmu-X2}
	\underset{t \ge 0}{\sup}\ \bE \left[  |\widehat{X}_t - X_t|^2 \right] \le  K \Delta.
	\end{equation} 
\end{Thm}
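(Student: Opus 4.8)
The plan is to control the error process $e_t:=\widehat{X}_t-X_t$ through It\^o's formula applied to $|e_t|^2$, using Condition \textbf{C6} to produce a contraction and showing that all discretization remainders are of order $\Delta$, uniformly in time. From \eqref{EM2} and \eqref{eqn1} we have $e_t=\int_0^t A_s\,ds+\int_0^t B_s\,dW_s+\int_0^t\int_{\mathbb{R}_0^d}C_s(z)\widetilde{N}(ds,dz)$ with $A_s=b(\theta_{\underline{s}},\widehat{X}_{\underline{s}})-b(\theta_s,X_s)$, $B_s=\sigma_\Delta(\theta_{\underline{s}},\widehat{X}_{\underline{s}})-\sigma(\theta_s,X_s)$ and $C_s(z)=\big(\gamma_\Delta(\theta_{\underline{s}-},\widehat{X}_{\underline{s}-})-\gamma(\theta_{s-},X_{s-})\big)z$. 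After a routine localization, It\^o's formula gives $\mathbb{E}[|e_t|^2]=\mathbb{E}\int_0^t\big(2\langle e_s,A_s\rangle+|B_s|^2+|\gamma_\Delta(\theta_{\underline{s}},\widehat{X}_{\underline{s}})-\gamma(\theta_s,X_s)|^2\int_{\mathbb{R}_0^d}|z|^2\nu(dz)\big)ds$, the jump compensator collapsing to the last summand. I would then split each coefficient difference at the matched argument $(\theta_s,\widehat{X}_s)$, writing $B_s=\big(\sigma(\theta_s,\widehat{X}_s)-\sigma(\theta_s,X_s)\big)+R^\sigma_s$ with $R^\sigma_s:=\sigma_\Delta(\theta_{\underline{s}},\widehat{X}_{\underline{s}})-\sigma(\theta_s,\widehat{X}_s)$, and analogously $R^b_s$, $R^\gamma_s$. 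Expanding $|B_s|^2$ and applying Young's inequality to the cross term with the parameter $\epsilon$ of \textbf{C6} (this is precisely why $1+\epsilon$ is built into \textbf{C6}; when $m=0$ the map $x\mapsto\sigma(i,x)$ is globally Lipschitz and one may take $\epsilon=0$, absorbing the cross term via an auxiliary small parameter instead), and doing the same for $\gamma$, Condition \textbf{C6} bounds the matched part by $\alpha|e_s|^2$ and leaves $\mathbb{E}[|e_t|^2]\le\widetilde{\alpha}\int_0^t\mathbb{E}[|e_s|^2]\,ds+\int_0^t\mathbb{E}[\mathcal{E}_s]\,ds$, where $\widetilde{\alpha}=\alpha+\epsilon_3+\epsilon''L_3^2\int_{\mathbb{R}_0^d}|z|^2\nu(dz)$ with auxiliary $\epsilon_3,\epsilon''>0$ that can be made arbitrarily small (using that $\gamma$ is globally Lipschitz), and $\mathcal{E}_s$ is a linear combination of $|R^b_s|^2,|R^\sigma_s|^2,|R^\gamma_s|^2$.

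The heart of the proof is then $\sup_{s\ge0}\big(\mathbb{E}[|R^b_s|^2]\vee\mathbb{E}[|R^\sigma_s|^2]\vee\mathbb{E}[|R^\gamma_s|^2]\big)\le C\Delta$, with $C$ independent of $T$. I would split, e.g., $R^\sigma_s=\big(\sigma_\Delta(\theta_{\underline{s}},\widehat{X}_{\underline{s}})-\sigma(\theta_{\underline{s}},\widehat{X}_{\underline{s}})\big)+\big(\sigma(\theta_{\underline{s}},\widehat{X}_{\underline{s}})-\sigma(\theta_{\underline{s}},\widehat{X}_s)\big)+\big(\sigma(\theta_{\underline{s}},\widehat{X}_s)-\sigma(\theta_s,\widehat{X}_s)\big)$ into a taming error, a spatial-increment error, and a switching error. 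The taming error is handled by \eqref{T6} and Proposition \ref{dinh ly 2} (time-uniform since $\widetilde{\zeta_0}<0$): $\mathbb{E}[|\sigma_\Delta(\theta_{\underline{s}},\widehat{X}_{\underline{s}})-\sigma(\theta_{\underline{s}},\widehat{X}_{\underline{s}})|^2]\le L_4^2\Delta\,\mathbb{E}[|\sigma(\theta_{\underline{s}},\widehat{X}_{\underline{s}})|^4]\le C\Delta$ as soon as $4m+4\le p_0$; the analogue for $\gamma$ uses \eqref{T6}, $|\gamma(i,\cdot)|\le L_0(1+|\cdot|)$ and $|b(i,x)|\le C(1+|x|^{l+1})$, hence needs $\mathbb{E}[|\widehat{X}_{\underline{s}}|^{2l+6}]<\infty$ — this is exactly the source of the hypothesis $p_0\ge2\max\{l+3,2m+2\}$. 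For the spatial-increment error a direct Cauchy--Schwarz on $(1+|\widehat{X}_s|^m)|\widehat{X}_{\underline{s}}-\widehat{X}_s|$ only gives $O(\sqrt\Delta)$, since by Lemma \ref{hq2} the best available bound is $\mathbb{E}|\widehat{X}_s-\widehat{X}_{\underline{s}}|^4\le C\Delta$ (and this is sharp, as an $O(1)$ jump occurs with probability $O(\Delta)$); instead I would use $|\widehat{X}_s|^m\le C(|\widehat{X}_{\underline{s}}|^m+|\widehat{X}_s-\widehat{X}_{\underline{s}}|^m)$ to bound $\mathbb{E}\big[|\sigma(\theta_{\underline{s}},\widehat{X}_{\underline{s}})-\sigma(\theta_{\underline{s}},\widehat{X}_s)|^2\mid\mathcal{F}_{\underline{s}}\big]\le C(1+|\widehat{X}_{\underline{s}}|^{2m})\,\mathbb{E}[|\widehat{X}_s-\widehat{X}_{\underline{s}}|^2\mid\mathcal{F}_{\underline{s}}]+C\,\mathbb{E}[|\widehat{X}_s-\widehat{X}_{\underline{s}}|^{2m+2}\mid\mathcal{F}_{\underline{s}}]$ and conclude by Lemma \ref{hq2} (both increments are $O(\Delta)$) and Proposition \ref{dinh ly 2}. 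The switching error is supported on $\{\theta_s\neq\theta_{\underline{s}}\}$, which conditionally on $\mathcal{F}_{\underline{s}}$ has probability $\le C(s-\underline{s})\le C\Delta$ and is independent of $(W,Z)$, hence of $\widehat{X}_s$, so $\mathbb{E}[|\sigma(\theta_{\underline{s}},\widehat{X}_s)-\sigma(\theta_s,\widehat{X}_s)|^2]\le C\Delta\,\mathbb{E}[1+|\widehat{X}_s|^{2m+2}]\le C\Delta$. The same three-step decomposition handles $R^b_s$ (no taming term; the increment part needs $2l+2\le p_0$) and $R^\gamma_s$ ($\gamma$ globally Lipschitz, so the increment part is immediate).

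For \eqref{EXmu-X1}, Gronwall's inequality applied to the inequality of Step 1, together with $\int_0^T\mathbb{E}[\mathcal{E}_s]\,ds\le CT\Delta$, gives $\sup_{0\le t\le T}\mathbb{E}[|e_t|^2]\le K_T\Delta$. For \eqref{EXmu-X2}, since $\alpha<0$ I choose $\epsilon_3,\epsilon''$ small enough that $\widetilde{\alpha}<0$; then Gronwall together with $\sup_{s\ge0}\mathbb{E}[\mathcal{E}_s]\le C\Delta$ (for which the time-uniform moment estimates of Proposition \ref{dinh ly 2} under $\widetilde{\zeta_0}<0$ are essential) yields $\sup_{t\ge0}\mathbb{E}[|e_t|^2]\le K\Delta$ with $K$ depending on $\widetilde{\alpha}$ but not on $T$. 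For \eqref{EXmu-X0}, following \cite{FG}, I would run the same It\^o computation on the process stopped at $\tau_R:=\inf\{t\ge0:|e_t|\ge R\}$ and use optional stopping (the stopped martingale parts being true martingales by the moment bounds) together with Gronwall to obtain $\mathbb{E}[|e_{t\wedge\tau_R}|^2]\le K'_T\Delta$, uniformly in $R$; since $|e_{\tau_R}|\ge R$ on $\{\tau_R\le T\}$ by right-continuity, Chebyshev's inequality gives $\mathbb{P}\big(\sup_{0\le t\le T}|e_t|\ge R\big)\le K'_T\Delta/R^2$, and then $\mathbb{E}\big[\sup_{0\le t\le T}|e_t|^p\big]=\int_0^\infty pr^{p-1}\mathbb{P}\big(\sup_{0\le t\le T}|e_t|\ge r\big)\,dr$, split at $r=(K'_T\Delta)^{1/2}$, delivers the stated constant for $p\in(0,2)$.

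The main obstacle is the second paragraph, and within it the spatial-increment part of $R^\sigma_s$: because the driving L\'evy jumps keep one-step increments of order $\sqrt\Delta$ in $L^2$ but only of order $\Delta^{1/p}$ in $L^p$ for $p>2$, the superlinear growth of $\sigma$ — the novel feature handled here — cannot be absorbed by a crude Cauchy--Schwarz, and one must instead peel the superlinear factor into an $\mathcal{F}_{\underline{s}}$-measurable quantity with controlled moments plus a higher power of the increment; this is where the full strength of Lemma \ref{hq2}, the uniform-in-time moment bounds of Proposition \ref{dinh ly 2}, and the moment budget $p_0\ge2\max\{l+3,2m+2\}$ all enter simultaneously.
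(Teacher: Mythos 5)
Your proposal is correct and follows essentially the same route as the paper: the same It\^o computation for $|X_t-\widehat{X}_t|^2$, the same splitting of each coefficient difference at the matched argument $(\theta_s,\widehat{X}_s)$ followed by the taming/spatial-increment/switching decomposition of the remainders, the same use of the $(1+\epsilon)$ slack in \textbf{C6}, and the same reliance on Lemma \ref{hq2}, Proposition \ref{dinh ly 2} and the $O(\Delta)$ switching probability — including your correct identification of why a crude Cauchy--Schwarz fails for the superlinear $\sigma$-increment and the conditioning-on-$\mathcal{F}_{\underline{s}}$ fix, which is exactly the paper's estimate \eqref{estimate2}. The only cosmetic differences are that the paper weights by $e^{\lambda_0 t}$ with $\lambda_0$ chosen to cancel the linear term rather than invoking Gronwall, and for \eqref{EXmu-X0} it cites Proposition IV.4.7 of Revuz--Yor where you re-derive the same domination bound by Chebyshev and the layer-cake formula (yielding the slightly better constant $\tfrac{2}{2-p}\le\tfrac{4-p}{2-p}$).
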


Note that the estimate  \eqref{T6} holds for  $\sigma_\Delta$ and $\gamma_{\Delta}$ given in \eqref{csmdelta1}.  

\begin{proof}	
		To simplify the exposition, we denote $Y_t:=X_t-\widehat{X}_t$. For any $\lambda\in\mathbb{R}$, applying  It\^o's formula to $e^{\lambda t}|Y_t|^2$ and using \eqref{eqn1} and \eqref{EM2}, we get
		\begin{align} 
		e^{\lambda t}|Y_t|^2 
		&= \mathcal{M}_t + \int_0^t  e^{\lambda s}\left[\lambda  |Y_s|^2 +2 \left\langle Y_s,  b(\theta_s,X_s)-b(\theta_{\underline{s}},\widehat{X}_{\underline{s}}) \right\rangle  + \left| \sm(\theta_s,X_s)-\sm_{\Delta}(\theta_{\underline{s}},\widehat{X}_{\underline{s}}) \right|^2 \right]ds  \notag\\ 
		&+ \int_0^t\int_{\mathbb{R}_0^d} e^{\lambda s}\left[ \left|\left(\gamma(\theta_s,X_{s})-\gamma_{\Delta}\left(\theta_{\underline{s}},\widehat{X}_{\underline{s}}\right)\right)z\right|^2    \right] \nu(d z) d s, 
		\label{tag25} 
		\end{align}
  where 
  \begin{align*}
      \mathcal{M}_t =  &2\int_0^t  e^{\lambda s}  \left\langle Y_s, \left(\sm(\theta_s,X_s)-\sm_{\Delta}(\theta_{\underline{s}},\widehat{X}_{\underline{s}})\right)dW_s \right\rangle\\
      &\quad + \int_{0}^{t} \int_{\mathbb{R}_0^d} e^{\lambda s}\left[\left|Y_{s-}+\left(\gamma\left(\theta_{s-},X_{s-}\right)-\gamma_{\Delta}\left(\theta_{\underline{s}-},\widehat{X}_{\underline{s}-}\right)\right) z\right|^2- \left|Y_{s-}\right|^2 \right] \widetilde{N}\left(d s, d z\right).
  \end{align*}
		
  Firstly, we have 
$$\left\langle Y_s, b\left(\theta_s,X_s\right)-b\left(\theta_{\underline{s}},\widehat{X}_{\underline{s}}\right) \right\rangle =\left\langle Y_s, b\left(\theta_s,X_s\right)-b\left(\theta_s,\widehat{X}_s\right) \right\rangle+\left\langle Y_s,  b\left(\theta_s,\widehat{X}_s\right) -b\left(\theta_{\underline{s}},\widehat{X}_{\underline{s}}\right) \right\rangle. $$
Using Cauchy's inequality, Condition \textbf{C1} and the inequality $(1+\vert x\vert+\vert y\vert)^2\leq 3\left(1+\vert x\vert^2+\vert y\vert^2\right)$ valid for all $x, y \in \bR^d$, we have that for any $\lambda_1>0$,
		\begin{align}
  &\left\langle Y_s,  b\left(\theta_s,\widehat{X}_s\right) -b\left(\theta_{\underline{s}},\widehat{X}_{\underline{s}}\right) \right\rangle \le  \lambda_1 |Y_s|^2+\dfrac{1}{4\lambda_1} \left| b\left(\theta_s,\widehat{X}_s\right) -b\left(\theta_{\underline{s}},\widehat{X}_{\underline{s}}\right)\right|^2 \notag \\
        & \leq  \lambda_1 |Y_s|^2+ \dfrac{1}{2\lambda_1} \Big( \Big|b\left(\theta_s, \Xm_s\right)-b\left(\theta_{s},\widehat{X}_{\underline{s}}\right)\Big|^2 + \Big|b\left(\theta_s, \Xm_{\underline{s}}\right)-b\left(\theta_{\underline{s}},\widehat{X}_{\underline{s}}\right)\Big|^2\Big) \notag \\
        & \leq  \lambda_1 |Y_s|^2 + \dfrac{1}{2\lambda_1}L_1^2\Big(1 + \left|\Xm_s\right|^l + \left|\widehat{X}_{\underline{s}}\right|^l\Big)^2 \left|\Xm_s - \widehat{X}_{\underline{s}}\right|^2 + \dfrac{1}{2\lambda_1}\Bigg(\Big|b\left(\theta_s, \Xm_{\underline{s}}\right)\Big| + \Big|b\left(\theta_{\underline{s}},\widehat{X}_{\underline{s}}\right)\Big|\Bigg)^2\mathbb{I}_{\{\theta_s\neq\theta_{\sd}\}} \notag \\
        & \leq  \lambda_1 |Y_s|^2 + \dfrac{3L_1^2}{2\lambda_1}\Big(1 + \left|\Xm_s\right|^{2l} + \left|\widehat{X}_{\underline{s}}\right|^{2l}\Big) \left|\Xm_s - \widehat{X}_{\underline{s}}\right|^2 + \dfrac{C}{2\lambda_1}\Big(1 + \left|\widehat{X}_{\underline{s}}\right|^{l+1}\Big)^2\mathbb{I}_{\{\theta_s\neq\theta_{\sd}\}} \notag \\
        &\leq  \lambda_1 |Y_s|^2 + \dfrac{3L_1^2}{2\lambda_1}\Big(1 + 2^{2l - 1}\left|\Xm_s - \Xm_{\underline{s}}\right|^{2l} + (2^{2l - 1} + 1)\left|{\Xm_{\underline{s}}}\right|^{2l}\Big) \left|\Xm_s - \widehat{X}_{\underline{s}}\right|^2 + \dfrac{C}{2\lambda_1}\Big(1 + \left|\widehat{X}_{\underline{s}}\right|^{l+1}\Big)^2\mathbb{I}_{\{\theta_s\neq\theta_{\sd}\}} \notag \\
        & \leq  \dfrac{3L_1^2}{2\lambda_1}\left|\Xm_s - \widehat{X}_{\underline{s}}\right|^2 + \dfrac{3L_1^2}{\lambda_1}2^{2l - 2}\left|\Xm_s - {\Xm_{\underline{s}}}\right|^{2l+2} \notag \\
        &\quad + \dfrac{3L_1^2}{2\lambda_1} \left(2^{2l - 1} + 1\right)\left|{\Xm_{\underline{s}}}\right|^{2l}\left|\Xm_s - \widehat{X}_{\underline{s}}\right|^2 + \dfrac{C}{\lambda_1}\Big(1 + \left|\widehat{X}_{\underline{s}}\right|^{2l+2}\Big)\mathbb{I}_{\{\theta_s\neq\theta_{\sd}\}} + \lambda_1 |Y_s|^2,\label{tag14} 
		\end{align}
for some positive constant $C = C(L_1, l, \max_{i\in S}\{|b(i,0)|\})$.\\
Secondly, for any $\lambda_2>0$, it holds that  
$$|a-b|^2 \leq (1+\lambda_2)|a-c|^2 + 2 \left(1+ \frac{1}{\lambda_2}\right)(|c-d|^2+ |d-b|^2), \quad a, b, c, d \in \mathbb{R}^d, \lambda_2 > 0.$$
Thus, 
\begin{align}
	\left|\sigma\left(\theta_s, X_s\right)-\sigma_\Delta\left(\theta_{\underline{s}},\widehat{X}_{\underline{s}}\right)\right|^2 
	&\le  \left(1+\lambda_2\right) \left|\sigma\left(\theta_s, X_s\right) - \sigma\left(\theta_s, \Xm_s\right)\right|^2 +   2\left(1+\dfrac{1}{\lambda_2}\right) \left|\sigma\left(\theta_s, \widehat{X}_s\right)-\sigma\left(\theta_s, \widehat{X}_{\underline{s}}\right)\right|^2 \notag \\
    &\quad + 2\left(1+\dfrac{1}{\lambda_2}\right)\left|\sigma\left(\theta_s, \widehat{X}_{\underline{s}}\right)-\sigma_\Delta\left(\theta_{\underline{s}}, \widehat{X}_{\underline{s}}\right)\right|^2.  \notag
\end{align}
Thanks to Condition \textbf{C2} and \eqref{T6}, we have that for any $\lambda_2>0$,
\begin{align*}
& \left|\sigma\left(\theta_s, \widehat{X}_s\right)-\sigma\left(\theta_s, \widehat{X}_{\underline{s}}\right)\right|^2  + \left|\sigma\left(\theta_s, \widehat{X}_{\underline{s}}\right)-\sigma_\Delta\left(\theta_{\underline{s}}, \widehat{X}_{\underline{s}}\right)\right|^2 \\
    &\le  L_2^2\left(1+|\widehat{X}_s|^{m}+|\widehat{X}_{\underline{s}}|^{m}\right)^2 \left| \widehat{X}_s - \widehat{X}_{\underline{s}}\right|^2  +2\left|\sigma\left(\theta_s, \widehat{X}_{\underline{s}}\right)-\sigma\left(\theta_{\underline{s}}, \widehat{X}_{\underline{s}}\right)\right|^2 + 2 \left|\sigma\left(\theta_{\underline{s}}, \widehat{X}_{\underline{s}}\right)-\sigma_\Delta\left(\theta_{\underline{s}}, \widehat{X}_{\underline{s}}\right)\right|^2 \notag \\
    &\le 3L_2^2 \left(1+|\widehat{X}_s|^{2m}+|\widehat{X}_{\underline{s}}|^{2m}\right) \left| \widehat{X}_s - \widehat{X}_{\underline{s}}\right|^2  + 2\left(\left|\sigma\left(\theta_s, \widehat{X}_{\underline{s}}\right)\right|+\left|\sigma\left(\theta_{\underline{s}}, \widehat{X}_{\underline{s}}\right)\right|\right)^2\mathbb{I}_{\{\theta_s\neq\theta_{\sd}\}} + 2L_4^2 \Delta\left| \sm\left(\theta_{\underline{s}},\widehat{X}_{\underline{s}}\right)\right|^4 \notag \\
	&\le  3 L_2^2\left(1+2^{2m-1}\left|  \widehat{X}_s - \widehat{X}_{\underline{s}} \right|^{2m} + \left(2^{2m-1}+1\right)|\widehat{X}_{\underline{s}}|^{2m}\right) \left| \widehat{X}_s - \widehat{X}_{\underline{s}}  \right|^2 \notag\\
	&\quad + C \left(1+|\widehat{X}_{\underline{s}}|^{2m+2}\right)\mathbb{I}_{\{\theta_s\neq\theta_{\sd}\}} + CL_4^2 \Delta \left(1+|\widehat{X}_{\underline{s}}|^{4m+4}\right) \notag \\
	&=     3 L_2^2 \left| \widehat{X}_s  -   \widehat{X}_{\underline{s}}  \right|^2+ 3 L_2^22^{2m-1}\left|  \widehat{X}_s - \widehat{X}_{\underline{s}} \right|^{2m+2} + 3L_2^2\left(2^{2m-1}+1\right)|\widehat{X}_{\underline{s}}|^{2m} \left| \widehat{X}_s  -   \widehat{X}_{\underline{s}}  \right|^2 \notag \\
 &\quad + C\left(1+|\widehat{X}_{\underline{s}}|^{2m+2}\right)\mathbb{I}_{\{\theta_s\neq\theta_{\sd}\}} + C\left(1+\dfrac{1}{\lambda_2}\right)L_4^2 \Delta \left(1+|\widehat{X}_{\underline{s}}|^{4m+4} \right), \label{tag15}
\end{align*}
for some positive constant $C=C\left(L_2,m,\max_{i\in S}\{|\sm(i,0)|\}\right)$.

Third,  for any $\lambda_3>0$, we have 
\begin{align}
     \left|\gamma(\theta_s,X_{s})-\gamma_{\Delta}\left(\theta_{\underline{s}},\widehat{X}_{\underline{s}}\right) \right|^2 \le & \left| \gamma(\theta_s,X_s)-\gamma(\theta_s,\widehat{X}_s)\right|^2+ \left| \gamma(\theta_s,\widehat{X}_s) -  \gamma_{\Delta}(\theta_{\underline{s}},\widehat{X}_{\underline{s}}) \right|^2 \notag \\ 
     &+ \lambda_3 \left| \gamma(\theta_s,X_s)-\gamma(\theta_s,\widehat{X}_s)\right|^2 + \dfrac{1}{\lambda_3}\left| \gamma(\theta_s,\widehat{X}_s) -  \gamma_{\Delta}(\theta_{\underline{s}},\widehat{X}_{\underline{s}}) \right|^2. \notag
\end{align}
Using Conditions \textbf{C1}, \textbf{C3} and \eqref{T6}, we have 
\begin{align}
    &\left|\gamma(\theta_s,X_{s})-\gamma_{\Delta}\left(\theta_{\underline{s}},\widehat{X}_{\underline{s}}\right) \right|^2  \\
    &\le \left| \gamma(\theta_s,X_s)-\gamma(\theta_s,\widehat{X}_s)\right|^2 + \lambda_3 L_3^2 \left|  X_s - \widehat{X}_s \right|^2 + 2\left(1+\dfrac{1}{\lambda_3}\right)L_3^2 \left| \widehat{X}_s  -   \widehat{X}_{\underline{s}}  \right|^2 \notag \\
    &\quad+ 4\left(1+\dfrac{1}{\lambda_3}\right)\left| \gamma(\theta_s,\widehat{X}_{\underline{s}}) -  \gamma(\theta_{\underline{s}},\widehat{X}_{\underline{s}}) \right|^2 + 4\left(1+\dfrac{1}{\lambda_3}\right)\left| \gamma(\theta_{\underline{s}},\widehat{X}_{\underline{s}}) -  \gamma_{\Delta}(\theta_{\underline{s}},\widehat{X}_{\underline{s}}) \right|^2  \notag \\
    &\le \left| \gamma(\theta_s,X_s)-\gamma(\theta_s,\widehat{X}_s)\right|^2 + \lambda_3 L_3^2 \left|  X_s - \widehat{X}_s \right|^2 + 2\left(1+\dfrac{1}{\lambda_3}\right)L_3^2 \left| \widehat{X}_s  -   \widehat{X}_{\underline{s}}  \right|^2 \notag \\
    &\quad+ C\left(1+\dfrac{1}{\lambda_3}\right)\left(1 + \left|\widehat{X}_{\underline{s}}\right|^2\right)\mathbb{I}_{\{\theta_s\neq\theta_{\sd}\}} + C\left(1+\dfrac{1}{\lambda_3}\right)L_4^2 \Delta  \left(1+\left|\widehat{X}_{\underline{s}}\right|^{2l+6}\right),
	\label{tag36}
\end{align}
for some positive constant $C=C(L_1,L_3,l,\max_{i\in S}\{|b(i,0)|\},\max_{i\in S}\{|\gamma(i,0)|\})$.

Therefore, inserting \eqref{tag14}, \eqref{tag15} and \eqref{tag36} into \eqref{tag25}, choosing $\lambda_2=\epsilon$ sufficiently small, and using Condition \textbf{C6} and $Y_s=X_s-\widehat{X}_s$, we obtain that for any $\lambda_1, \lambda_3>0$,
\begin{align} 
	&e^{\lambda t}|Y_t|^2 
    \le  \mathcal{M}_t + \int_0^t  e^{\lambda  s}\Bigg[\lambda  |Y_s|^2 + 2\left\langle Y_s, b\left(\theta_s, X_s\right)-b\left(\theta_{s},\widehat{X}_s\right)\right\rangle + \dfrac{3L_1^2}{\lambda_1}\left|\Xm_s - \widehat{X}_{\underline{s}}\right|^2 + \dfrac{6L_1^2}{\lambda_1}2^{2l - 2}\left|\Xm_s - {\Xm_{\underline{s}}}\right|^{2l+2} \notag \\
    &\quad + \dfrac{3L_1^2}{\lambda_1} \left(2^{2l - 1} + 1\right)\left|{\Xm_{\underline{s}}}\right|^{2l}\left|\Xm_s - \widehat{X}_{\underline{s}}\right|^2 + \dfrac{C}{\lambda_1}\Big(1 + \left|\widehat{X}_{\underline{s}}\right|^{2l+2}\Big)\mathbb{I}_{\{\theta_s\neq\theta_{\sd}\}} + 2\lambda_1 |Y_s|^2  \notag \\
    &+ \left(1 + \epsilon\right)\left| \sm(\theta_s,X_s)-\sm(\theta_s,\widehat{X}_s) \right|^2 +6\left(1+\dfrac{1}{\epsilon}\right)L_2^2 \left| \widehat{X}_s  -   \widehat{X}_{\underline{s}}  \right|^2+ 6\left(1+\dfrac{1}{\epsilon}\right)L_2^22^{2m-1}\left|  \widehat{X}_s - \widehat{X}_{\underline{s}} \right|^{2m+2}\notag\\ 
    &\quad+6\left(1+\dfrac{1}{\epsilon}\right)L_2^2\left(2^{2m-1}+1\right)|\widehat{X}_{\underline{s}}|^{2m} \left| \widehat{X}_s  -   \widehat{X}_{\underline{s}}  \right|^2 + C\left(1+\dfrac{1}{\epsilon}\right)\left(1+|\widehat{X}_{\underline{s}}|^{2m+2}\right)\mathbb{I}_{\{\theta_s\neq\theta_{\sd}\}} \notag \\
    &\quad + C\left(1+\dfrac{1}{\epsilon}\right)L_4^2 \Delta \left(1+|\widehat{X}_{\underline{s}}|^{4m+4} \right)\Bigg]ds  \notag \\
	&\quad + \int_0^t\int_{\mathbb{R}_0^d} e^{\lambda s}\Bigg[ \left| \gamma(\theta_s,X_s)-\gamma(\theta_s,\widehat{X}_s)\right|^2 + \lambda_3 L_3^2 \left|  X_s - \widehat{X}_s \right|^2 + 2\left(1+\dfrac{1}{\lambda_3}\right)L_3^2 \left| \widehat{X}_s  -   \widehat{X}_{\underline{s}}  \right|^2 \notag \\
    &\quad+ C\left(1+\dfrac{1}{\lambda_3}\right)\left(1 + \left|\widehat{X}_{\underline{s}}\right|^2\right)\mathbb{I}_{\{\theta_s\neq\theta_{\sd}\}} + C\left(1+\dfrac{1}{\lambda_3}\right)L_4^2 \Delta  \left(1+\left|\widehat{X}_{\underline{s}}\right|^{2l+6}\right)\Bigg] |z|^2 \nu(d z) d s \notag \\
    &=  \mathcal{M}_t + \left(\lambda + 2\lambda_1 + \lambda_3 L_3^2\int_{\mathbb{R}_0^d}|z|^2\nu(d z)\right)\int_0^t e^{\lambda  s}|Y_s|^2 ds \notag \\
    &\quad+ \int_0^t e^{\lambda  s}\left[2\left\langle Y_s, b\left(\theta_s, X_s\right)-b\left(\theta_{s},\widehat{X}_s\right)\right\rangle + \left(1 + \epsilon\right)\left| \sm(\theta_s,X_s)-\sm(\theta_s,\widehat{X}_s) \right|^2 \right] ds \notag \\
    &\quad+ \int_0^t e^{\lambda  s} \left| \gamma(\theta_s,X_s)-\gamma(\theta_s,\widehat{X}_s)\right|^2\int_{\mathbb{R}_0^d}|z|^2\nu(d z) ds \notag \\
    &\quad+ \int_0^t e^{\lambda  s}\Bigg[\left(\dfrac{3L_1^2}{\lambda_1} + 6\left(1+\dfrac{1}{\epsilon}\right)L_2^2 + 2\left(1+\dfrac{1}{\lambda_3}\right)L_3^2\int_{\mathbb{R}_0^d}|z|^2\nu(d z)\right)\left|\Xm_s - \widehat{X}_{\underline{s}}\right|^2 + \dfrac{6L_1^2}{\lambda_1}2^{2l - 2}\left|\Xm_s - {\Xm_{\underline{s}}}\right|^{2l+2} \notag \\ 
    &\quad+ \dfrac{3L_1^2}{\lambda_1} \left(2^{2l - 1} + 1\right)\left|{\Xm_{\underline{s}}}\right|^{2l}\left|\Xm_s - \widehat{X}_{\underline{s}}\right|^2 + \dfrac{C}{\lambda_1}\Big(1 + \left|\widehat{X}_{\underline{s}}\right|^{2l+2}\Big)\mathbb{I}_{\{\theta_s\neq\theta_{\sd}\}} + 6\left(1+\dfrac{1}{\epsilon}\right)L_2^22^{2m-1}\left|  \widehat{X}_s - \widehat{X}_{\underline{s}} \right|^{2m+2}\notag\\ 
    &\quad+6\left(1+\dfrac{1}{\epsilon}\right)L_2^2\left(2^{2m-1}+1\right)|\widehat{X}_{\underline{s}}|^{2m} \left| \widehat{X}_s  -   \widehat{X}_{\underline{s}}  \right|^2 + C\left(1+\dfrac{1}{\epsilon}\right)\left(1+|\widehat{X}_{\underline{s}}|^{2m+2}\right)\mathbb{I}_{\{\theta_s\neq\theta_{\sd}\}} \notag \\
    &\quad + C\left(1+\dfrac{1}{\epsilon}\right)L_4^2 \Delta \left(1+|\widehat{X}_{\underline{s}}|^{4m+4} \right) + \Bigg(C\left(1+\dfrac{1}{\lambda_3}\right)\left(1 + \left|\widehat{X}_{\underline{s}}\right|^2\right)\mathbb{I}_{\{\theta_s\neq\theta_{\sd}\}} \notag \\ 
    &\quad + C\left(1+\dfrac{1}{\lambda_3}\right)L_4^2 \Delta  \left(1+\left|\widehat{X}_{\underline{s}}\right|^{2l+6}\right)\Bigg)\int_{\mathbb{R}_0^d}|z|^2 \nu(d z) \Bigg] ds \notag \\
    &\le \mathcal{M}_t +  \left(\alpha + \lambda + 2\lambda_1 + \lambda_3 L_3^2\int_{\mathbb{R}_0^d}|z|^2\nu(d z)\right)\int_0^t e^{\lambda  s}|Y_s|^2 ds \notag \\
    &\quad+ \int_0^t e^{\lambda  s}\Bigg[\left(\dfrac{3L_1^2}{\lambda_1} + 6\left(1+\dfrac{1}{\epsilon}\right)L_2^2 + 2\left(1+\dfrac{1}{\lambda_3}\right)L_3^2\int_{\mathbb{R}_0^d}|z|^2\nu(d z)\right)\left|\Xm_s - \widehat{X}_{\underline{s}}\right|^2 \notag \\
     &\quad+  \dfrac{6L_1^2}{\lambda_1}2^{2l - 2}\left|\Xm_s - {\Xm_{\underline{s}}}\right|^{2l+2} + \dfrac{3L^2_1}{\lambda_1} \left(2^{2l - 1} + 1\right)\left|{\Xm_{\underline{s}}}\right|^{2l}\left|\Xm_s - \widehat{X}_{\underline{s}}\right|^2 \notag \\ 
    &\quad+ \dfrac{C}{\lambda_1}\Big(1 + \left|\widehat{X}_{\underline{s}}\right|^{2l+2}\Big)\mathbb{I}_{\{\theta_s\neq\theta_{\sd}\}} + 6\left(1+\dfrac{1}{\epsilon}\right)L_2^22^{2m-1}\left|  \widehat{X}_s - \widehat{X}_{\underline{s}} \right|^{2m+2}\notag\\ 
    &\quad+6\left(1+\dfrac{1}{\epsilon}\right)L_2^2\left(2^{2m-1}+1\right)|\widehat{X}_{\underline{s}}|^{2m} \left| \widehat{X}_s  -   \widehat{X}_{\underline{s}}  \right|^2 + C\left(1+\dfrac{1}{\epsilon}\right)\left(1+|\widehat{X}_{\underline{s}}|^{2m+2}\right)\mathbb{I}_{\{\theta_s\neq\theta_{\sd}\}} \notag \\
    &\quad + C\left(1+\dfrac{1}{\epsilon}\right)L_4^2 \Delta \left(1+|\widehat{X}_{\underline{s}}|^{4m+4} \right) + \Bigg(C\left(1+\dfrac{1}{\lambda_3}\right)\left(1 + \left|\widehat{X}_{\underline{s}}\right|^2\right)\mathbb{I}_{\{\theta_s\neq\theta_{\sd}\}} \notag \\ 
    &\quad + C\left(1+\dfrac{1}{\lambda_3}\right)L_4^2 \Delta  \left(1+\left|\widehat{X}_{\underline{s}}\right|^{2l+6}\right)\Bigg)\int_{\mathbb{R}_0^d}|z|^2 \nu(d z) \Bigg] ds 
    \label{estimate}
\end{align}
for some positive constant $C = C(L_1,L_2,L_3,l,m,\max_{i\in S}\{|b(i,0)|\},\max_{i\in S}\{|\gamma(i,0)|\},\max_{i\in S}\{|\sm(i,0)|\})$.\\
Then, taking $\lambda = \lambda_0 := - \left(\alpha  + 2\lambda_1+\lambda_3 L_3^2 \int_{\bR_{0}^{d}}|z|^2 \nu(dz)\right)$ in \eqref{estimate} yields to
\begin{align}
	e^{\lambda_0 t}|Y_t|^2 
	&\le \mathcal{M}_t + \int_0^t e^{\lambda_0  s}\Bigg[\left(\dfrac{3L_1^2}{\lambda_1} + 6\left(1+\dfrac{1}{\epsilon}\right)L_2^2 + 2\left(1+\dfrac{1}{\lambda_3}\right)L_3^2\int_{\mathbb{R}_0^d}|z|^2\nu(d z)\right)\left|\Xm_s - \widehat{X}_{\underline{s}}\right|^2 \notag \\
 &\quad + \dfrac{6L_1^2}{\lambda_1}2^{2l - 2}\left|\Xm_s - {\Xm_{\underline{s}}}\right|^{2l+2} + \dfrac{3L_1^2}{\lambda_1} \left(2^{2l - 1} + 1\right)\left|{\Xm_{\underline{s}}}\right|^{2l}\left|\Xm_s - \widehat{X}_{\underline{s}}\right|^2\notag \\ 
    &\quad + \dfrac{C}{\lambda_1}\Big(1 + \left|\widehat{X}_{\underline{s}}\right|^{2l+2}\Big)\mathbb{I}_{\{\theta_s\neq\theta_{\sd}\}} + 6\left(1+\dfrac{1}{\epsilon}\right)L_2^22^{2m-1}\left|  \widehat{X}_s - \widehat{X}_{\underline{s}} \right|^{2m+2}\notag\\ 
    &\quad+6\left(1+\dfrac{1}{\epsilon}\right)L_2^2\left(2^{2m-1}+1\right)|\widehat{X}_{\underline{s}}|^{2m} \left| \widehat{X}_s  -   \widehat{X}_{\underline{s}}  \right|^2 + C\left(1+\dfrac{1}{\epsilon}\right)\left(1+|\widehat{X}_{\underline{s}}|^{2m+2}\right)\mathbb{I}_{\{\theta_s\neq\theta_{\sd}\}} \notag \\
    &\quad + C\left(1+\dfrac{1}{\epsilon}\right)L_4^2 \Delta \left(1+|\widehat{X}_{\underline{s}}|^{4m+4} \right) + \Bigg(C\left(1+\dfrac{1}{\lambda_3}\right)\left(1 + \left|\widehat{X}_{\underline{s}}\right|^2\right)\mathbb{I}_{\{\theta_s\neq\theta_{\sd}\}} \notag \\ 
    &\quad + C\left(1+\dfrac{1}{\lambda_3}\right)L_4^2 \Delta  \left(1+\left|\widehat{X}_{\underline{s}}\right|^{2l+6}\right)\Bigg)\int_{\mathbb{R}_0^d}|z|^2 \nu(d z) \Bigg] ds. 
    \label{estimate1}
\end{align}
Now, using Lemma \ref{hq2}, Proposition \ref{dinh ly 2} and the fact that $p_0 \ge \max\{2l+6, 4m+4\}$, we get
\begin{align}
	&\bE \left[\left|\widehat{X}_s-\widehat{X}_{\underline{s}}\right|^{2}\right] \leq C\Delta; \quad \bE \left[\left|\widehat{X}_s-\widehat{X}_{\underline{s}}\right|^{2l+2}\right] \leq C\Delta; \quad \bE \left[\left|\widehat{X}_s-\widehat{X}_{\underline{s}}\right|^{2m+2}\right] \leq C\Delta, \notag \\
	&\bE \left[|\widehat{X}_{\underline{s}}|^{4m+4}\right]\leq C; \quad \bE \left[|\widehat{X}_{\underline{s}}|^{2l+6}\right]\leq C, \label{estimate4}
\end{align}
and for $\upsilon \in\{l,m\}$,
\begin{align}
	\bE \left[|\widehat{X}_{\underline{s}}|^{2\upsilon} |\widehat{X}_s-\widehat{X}_{\underline{s}}|^{2}\right] =\bE \left[\bE \left[|\widehat{X}_{\underline{s}}|^{2\upsilon} |\widehat{X}_s-\widehat{X}_{\underline{s}}|^{2}\vert  \mathcal{F}_{\underline{s}}\right] \right] 
    &=\bE \left[|\widehat{X}_{\underline{s}}|^{2\upsilon}\bE \left[ |\widehat{X}_s-\widehat{X}_{\underline{s}}|^{2}\vert  \mathcal{F}_{\underline{s}}\right] \right]  \notag 	\\
	& \leq C\Delta \bE \left[|\widehat{X}_{\underline{s}}|^{2\upsilon}\right]  \leq C\Delta, \label{estimate2}
\end{align}
for some positive constant $C$.
Note that 
\begin{equation*}
    \begin{split}
    \displaystyle \mathbb{E}\left[\mathbb{I}_{\{\theta_s\neq\theta_{\sd}\}}|\mathcal{F}_{\sd}\right] 
    & = \displaystyle\sum_{i\in S}\mathbb{I}_{\{\theta_{\sd}=i\}}\sum_{j \in S;j\neq i}\pr\left[\theta_s = j|\theta_{\sd}=i\right] = \displaystyle\sum_{i\in S}\mathbb{I}_{\{\theta_{\sd}=i\}}\sum_{j \in S;j\neq i}\left(\vartheta_{ij}(s - \sd) + o(s-\sd)\right)\\
    & \leq \displaystyle C\left(\max_{i\in S}(-\vartheta_{ii})(s-\sd) + o(s-\sd)\right)\sum_{i\in S}\mathbb{I}_{\{\theta_{\sd}=i\}} \leq C\Delta.
    \end{split}
\end{equation*}
    Thus, 
    \begin{equation} \label{key}
    \mathbb{E}\left[\left(1+|\Xm_{\sd}|^{2g+2}\right)\mathbb{I}_{\{\theta_s\neq\theta_{\sd}\}}\right]\leq C\Delta,
    \end{equation}
for some positive constant $C$ for any $g \in \{l,m,0\}$.

Consequently, plugging \eqref{estimate4},\eqref{estimate2}, \ref{key} into \eqref{estimate1}, taking the expectation on both sides, we get that for any $t \in[0,T]$,
		\begin{align*}
		\bE \left[e^{\lambda_0 t}|Y_t|^2 \right]
		&\le  C_T \Delta \int_{0}^{t} e^{\lambda_0 s}ds. 
		\end{align*}	
for some positive constant $C_T$. This implies \eqref{EXmu-X1}. 
Next, let $\tau $ be any stopping time bounded by $T$.  It follows from \eqref{estimate1} and the estimates \eqref{estimate4},\eqref{estimate2}, \eqref{key} that 
	\begin{align*}
	&\bE \left[e^{\lambda_0 \tau}|Y_{\tau}|^2 \right]\\
	&\le \int_0^T e^{\lambda_0  s}\bE\Bigg[\left(\dfrac{3L_1^2}{\lambda_1} + 6\left(1+\dfrac{1}{\epsilon}\right)L_2^2 + 2\left(1+\dfrac{1}{\lambda_3}\right)L_3^2\int_{\mathbb{R}_0^d}|z|^2\nu(d z)\right)\left|\Xm_s - \widehat{X}_{\underline{s}}\right|^2 + \dfrac{6L_1^2}{\lambda_1}2^{2l - 2}\left|\Xm_s - {\Xm_{\underline{s}}}\right|^{2l+2} \notag \\ 
    &\quad+ \dfrac{3L_1^2}{\lambda_1} \left(2^{2l - 1} + 1\right)\left|{\Xm_{\underline{s}}}\right|^{2l}\left|\Xm_s - \widehat{X}_{\underline{s}}\right|^2 + \dfrac{C}{\lambda_1}\Big(1 + \left|\widehat{X}_{\underline{s}}\right|^{2l+2}\Big)\mathbb{I}_{\{\theta_s\neq\theta_{\sd}\}} + 6\left(1+\dfrac{1}{\epsilon}\right)L_2^22^{2m-1}\left|  \widehat{X}_s - \widehat{X}_{\underline{s}} \right|^{2m+2}\notag\\ 
    &\quad+6\left(1+\dfrac{1}{\epsilon}\right)L_2^2\left(2^{2m-1}+1\right)|\widehat{X}_{\underline{s}}|^{2m} \left| \widehat{X}_s  -   \widehat{X}_{\underline{s}}  \right|^2 + C\left(1+\dfrac{1}{\epsilon}\right)\left(1+|\widehat{X}_{\underline{s}}|^{2m+2}\right)\mathbb{I}_{\{\theta_s\neq\theta_{\sd}\}} \notag \\
    &\quad + C\left(1+\dfrac{1}{\epsilon}\right)L_4^2 \Delta \left(1+|\widehat{X}_{\underline{s}}|^{4m+4} \right) + \Bigg(C\left(1+\dfrac{1}{\lambda_3}\right)\left(1 + \left|\widehat{X}_{\underline{s}}\right|^2\right)\mathbb{I}_{\{\theta_s\neq\theta_{\sd}\}} \notag \\ 
    &\quad + C\left(1+\dfrac{1}{\lambda_3}\right)L_4^2 \Delta  \left(1+\left|\widehat{X}_{\underline{s}}\right|^{2l+6}\right)\Bigg)\int_{\mathbb{R}_0^d}|z|^2 \nu(d z) \Bigg] ds \le  \widetilde{C}_T \Delta,
	\end{align*}
for some constant $\widetilde{C}_T>0$ not depending on the stopping time $\tau$. Applying Proposition IV.4.7 in \cite{RY}, we obtain for any $p \in (0,2)$ that 
	\begin{align*}
	\bE \left[ \underset{0 \le t \le T}{\sup} e^{\frac{p \lambda_0 t}{2}}|Y_t|^p \right] \leq \left(\dfrac{2-p/2}{1-p/2}\right)  (\widetilde{C}_T\Delta)^{p/2}.
	\end{align*}
Since  $e^{\frac{p \lambda_0 t}{2}}\geq e^{-\frac{p \vert\lambda_0 \vert T}{2}}$,  we concludes \eqref{EXmu-X0}. 

When $\alpha<0$, we can always choose $\lambda_1, \lambda_3>0$ such that
$
\alpha  + 2\lambda_1+\lambda_3 L_3^2 \int_{\bR_{0}^{d}}|z|^2 \nu(dz)<0.
$
From this, the chosen $\lambda_0$ is negative. As a c1111onsequence,  when $\alpha<0$ and $\widetilde{{\zeta_0}}  <0$, the constant $C_T$ can be chosen in a way that does not depend on $T$. Therefore, we obtain \eqref{EXmu-X2}. This finishes the desired proof.
\end{proof} 

From Theorem \ref{dinh ly 6}, the approximation scheme (\ref{EM1}) gives an approximated solution for SDE (\ref{eqn1}) that satisfies \eqref{MLMC1}. Moreover, in the next Theorem, one can see that if the length of estimation interval $T$, the number of levels $L$ and the number of samples $N_l$ for all levels $l$ are chosen as in Theorem \ref{MSEbound}, the expected overall estimation cost will be $O(\varepsilon^{-2}|\log \varepsilon|)$.

\begin{Thm}
    If the length of estimation interval $T$, the number of levels $L$ and the number of samples $N_l$ for all levels $l$ are chosen as in Theorem \ref{MSEbound}, then there exists a positive constant $C_3$ such that
    $$
    \textbf{C}_{MLMC} \leq C_3\varepsilon^{-2}|\log \varepsilon|^3.
    $$
\end{Thm}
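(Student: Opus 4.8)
The plan is to bound the overall cost $\mathbf{C}_{MLMC}=\sum_{l=0}^L N_l\overline{C}_l$ term by term, using the explicit choices of $T$, $L$, $N_l$ from Theorem \ref{MSEbound} together with a per-sample cost estimate on level $l$ obtained from Remark \ref{hq1}.

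First I would fix the refinement structure of the MLMC scheme. On level $l$ I run the TAEM scheme \eqref{EM1} with step parameter $\Delta_l=(\mathbf{K}_0/K)M^{-l}$, where $K$ is the constant in \eqref{EXmu-X2}; since in the present setting $\widetilde{\zeta_0}<0$ (by Remark \ref{nhan xet 1}, as $\zeta_0<0$) and $\alpha<0$, estimate \eqref{EXmu-X2} gives $\bE[|\widehat{X}^l_T-X_T|^2]\le K\Delta_l=\mathbf{K}_0 M^{-l}$, i.e. precisely condition \eqref{MLMC1}, so Lemma \ref{expconv}, Lemma \ref{variance bound} and Theorem \ref{MSEbound} all apply with this choice. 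A single sample on level $l\ge 1$ amounts to generating the coupled pair $(\widehat{X}^{(n,l)}_T,\widehat{X}^{(n,l-1)}_T)$ on $[0,T]$ along one realization of the Brownian motion and the jump process, so its expected cost is, up to an absolute constant, the expected total number of timesteps: $\overline{C}_l\le c\,(\bE[N^{(l)}_T]+\bE[N^{(l-1)}_T])$ (and $\overline{C}_0\le c\,\bE[N^{(0)}_T]$). By Remark \ref{hq1}, $\bE[N^{(l)}_T-1]\le C_T\Delta_l^{-1}$, and revisiting the counting argument behind \eqref{ENT} with the uniform-in-time moment bounds of Proposition \ref{dinh ly 2} shows that $C_T$ can be taken proportional to $T$ for $T\ge 1$. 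Since $\Delta_{l-1}=M\Delta_l$, this yields $\overline{C}_l\le \widetilde{C}\,T\,M^{l}$ for a constant $\widetilde{C}$ independent of $l$, $T$, $\varepsilon$.

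Next I would substitute into the cost sum. Using $\lceil a\rceil\le a+1$, one has $N_l\le 3\mathbf{K}_2(L+1)\varepsilon^{-2}M^{-l}+1$, hence
$$N_l\overline{C}_l\le\left(\frac{3\mathbf{K}_2(L+1)}{\varepsilon^2 M^l}+1\right)\widetilde{C}\,T\,M^l=\frac{3\widetilde{C}\,\mathbf{K}_2\,T\,(L+1)}{\varepsilon^2}+\widetilde{C}\,T\,M^l.$$
Summing over $l=0,\dots,L$,
$$\mathbf{C}_{MLMC}\le\frac{3\widetilde{C}\,\mathbf{K}_2\,T\,(L+1)^2}{\varepsilon^2}+\widetilde{C}\,T\,\frac{M^{L+1}-1}{M-1}.$$
Finally I would insert the magnitudes coming from Theorem \ref{MSEbound}: since $\alpha<0$, $T=O(|\log\varepsilon|)$; from the formula for $L$, $L+1=O(|\log\varepsilon|)$ and $M^{L}\le M\cdot 6\mathbf{K}_0 L_\varphi^2\,\varepsilon^{-2}=O(\varepsilon^{-2})$. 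Therefore the first term is $O(\varepsilon^{-2}|\log\varepsilon|^3)$ and the second is $O(\varepsilon^{-2}|\log\varepsilon|)$, which together give $\mathbf{C}_{MLMC}\le C_3\,\varepsilon^{-2}|\log\varepsilon|^3$, as claimed.

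The step I expect to be the main obstacle is the per-sample cost bound $\overline{C}_l\le\widetilde{C}\,T\,M^l$: one must verify that the expected number of adaptive timesteps of the TAEM scheme over $[0,T]$ is $O(T\Delta_l^{-1})$ with a constant that depends at most linearly on $T$, which is exactly where the uniform-in-time moment estimates of Proposition \ref{dinh ly 2} (valid because $\widetilde{\zeta_0}<0$) and the Fang--Giles-type counting argument behind Remark \ref{hq1} are essential. Once that bound is in place, the remainder is elementary bookkeeping with the expressions for $T$, $L$ and $N_l$.
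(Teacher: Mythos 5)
Your proposal is correct and follows essentially the same route as the paper: substitute the choices of $T$, $L$, $N_l$ into $\mathbf{C}_{MLMC}=\sum_{l=0}^L N_l\overline{C}_l$ with a per-sample cost of order $TM^l$, and observe that the dominant contribution $\varepsilon^{-2}T(L+1)^2$ is $O(\varepsilon^{-2}|\log\varepsilon|^3)$. The only difference is that the paper simply sets $\overline{C}_l = TM^l$ without comment, whereas you justify this bound for the adaptive scheme via Remark \ref{hq1} and the uniform-in-time moments of Proposition \ref{dinh ly 2} — a gap in the paper's exposition that your argument correctly identifies and fills.
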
\label{TheoryComputationalCost}
\begin{proof}
    We have that the expected overall cost of the Multi-level estimator is $\sum_{l=0}^L N_l \overline{C}_l$. Thus, we can have that
    \begin{align*}
        \textbf{C}_{MLMC} 
         = \sum_{l=0}^L N_l T M^l 
        &= \sum_{l=0}^L \left\lceil3 \varepsilon^{-2}M^{-l}C_2 \left( \left \lceil \dfrac{2|\log \varepsilon|}{\log M} + \dfrac{\log(6\mathbf{K})_0 L_\varphi^2}{\log M} \right \rceil +1 \right)\right\rceil T M^l \leq C_3 \varepsilon^{-2}|\log \varepsilon|^3,
    \end{align*}
    for some constant $C_3$ depending on $C_2, M, \kappa_1$ and $\alpha$.
\end{proof}

\section{Numerical experiments} \label{sec:nume}
\subsection{Path approximation}
We consider numerical experiments for Markovian switching SDE  with jumps \eqref{eqn1} with $N = 2$, $d= 3$, $X_0 = 0 \in \mathbb{R}^3$, 
\begin{align*}
b(1, x) &= (1- x_1 - x_1^3; 1 - x_2 - x_2^3; 1 - x_3 - x_3^3)^\mathsf{T}, \quad
b(2, x) = (2- x_1 - x_1^3; 2 - x_2 - x_2^3; 2 - x_3 - x_3^3)^\mathsf{T}, \\
 \sigma(1, x) &= 0.3 \times  \text{diag} (x_2^2; x_3^2; x_1^2), \quad
 \sigma(2, x) = 0.3 \times  \text{diag} (x_1^2; x_2^2; x_3^2), \\
  \gamma(1, x) &= 0.2 \times \text{diag}(x_1; x_2 + \sin x_3; \cos x_3), \quad
 \gamma(2, x) = 0.2 \times \text{diag}(\cos x_3; x_1; x_2 + \sin x_3).
 \end{align*}
It can be verified that  such coefficients satisfy Conditions \textbf{C1}--\textbf{C6},  \eqref{T5}, and  \eqref{T6} with $l = 2, m = 1, p_0= 10$, ${\zeta_0} = \tilde{{\zeta_0}}< 0$, and $\alpha <0$. 
 Therefore, it follows from Theorem \ref{dinh ly 6} that the TAEM approximation scheme {defined} by \eqref{EM1}, \eqref{chooseh} and \eqref{csmdelta1} converges  with the rate $1/2$ in $L^2$-norm on any finite and infinite time intervals. Let  $Z= (Z^1, Z^2, Z^3)^\mathsf{T}$. We consider the following states of activity of $Z$.

\begin{itemize}
    \item \emph{Finite activity}: For each $j = 1, 2, 3$, let $Z^j$ be a compound Poisson process of the form  $Z^j_t = \sum_{i=1}^{P^j_t} \xi^j_i,$ where $(P^j_t)_{t \geq 0}, j = 1, 2, 3,$ are three independent Poisson processes with the same intensity $\lambda = 10$, and  the jump amplitude $(\xi^j_i)_{i,j\geq 1}$ is an array of independent and $\mathcal{N}(0, 0.4^2)$-distributed random variables.
    \item \emph{Infinite activity}: 
    Let $Z = (Z_t)_{t \geq 0}$ be a bilateral Gamma process with  scale parameter
     $10$ and shape parameter $1$.
\end{itemize}

For each activity state, the function $\textrm{MSE}(\ell, T)$ represents the mean squared error between levels $\ell$ and $\ell+1$ for the approximation $\widehat{X}$, defined as
$$\textrm{MSE}(\ell, T) = \frac{1}{M} \sum_{k=1}^M |\widehat{X}^{(\ell,k)}_T - \widehat{X}^{(\ell+1,k)}_T|^2,$$
where $\widehat{X}^{(\ell,1)}, \ldots, \widehat{X}^{(\ell,M)}$ are $M$ independent copies of $\widehat{X}^{(\ell)}$, constructed according to equations \eqref{EM1}, \eqref{chooseh}, and \eqref{csmdelta1} with step size $\Delta = 2^{-\ell}$ for each level $\ell \geq 1$. Additionally, it is essential that $\widehat{X}^{(\ell,k)}_T$ and $\widehat{X}^{(\ell+1,k)}_T$ be simulated using the same Brownian motion and Lévy process sample paths (see Algorithm 1 in \cite{FG}).

It can be seen that if there exists some positive number $\Lambda_0$ such that $2^{\Lambda_0 \ell} \|\widehat{X}^{(\ell+1)}_T - \widehat{X}^{(\ell)}_T\|_{L^2} = O(1)$, then this is equivalent to the fact that $\widehat{X}^{(\ell)}$ possesses some convergence rate of order $\Lambda_0 \in (0,+\infty)$ in $L^2$-norm. It follows that $\log_2 \textrm{MSE}(\ell,T) = -2\Lambda_0 \ell + \Lambda_1 + o(1),  $
for some constant $\Lambda_1$. This suggests using a linear regression model to estimate the empirical convergence rate $\Lambda_0$.

Table \ref{table1} and Table \ref{table2} show the values of $\log_2 \textrm{MSE}(\ell,T)$ with $T=5$ and $T=10$ and $\ell = 1,\ldots, 6$. The maximum likelihood estimator of $\Lambda_0$ are  $0.6$ in the finite activity case and $0.55$ in the infinite activity case. 
Moreover, it can be seen that the values of $\log_2 \textrm{MSE}(\ell,5)$ and $\log_2 \textrm{MSE}(\ell,10)$ are almost the same. These facts  support our theoretical result stated in Theorem \ref{dinh ly 6}. 

\begin{table}
\begin{center}
\begin{tabular}{c|ccccc}
$\ell$&	2&	3&	4&	5&	6\\
\hline 
$T=5$	& -9.21	& -11.06	&-12.55	&-13.59	&-14.67\\
\hline 
$T=10$	&-9.16	&-11.02	&-12.14	&-13.48	&-14.68
\end{tabular}
\caption{Finite activity: The values of error $\log_2 \textrm{MSE}(\ell,T)$ with $T=5$ and $T=10$.}
\label{table1}
\end{center}
\end{table}

\begin{table}
\begin{center}
\begin{tabular}{c|ccccc}
$\ell$&	2&	3&	4&	5&	6\\
\hline 
$T=5$	& -8.29	& -10.04	&-11.34	&-12.56	&-13.56\\
\hline 
$T=10$	&-8.42	&-10.12	&-11.11	&-12.59	&-13.22
\end{tabular}
\caption{Infinite activity: The values of error $\log_2 \textrm{MSE}(\ell,T)$ with $T=5$ and $T=10$.}
\label{table2}
\end{center}
\end{table}

\subsection{Invariant measure approximation}
In this subsection, we show our simulation result for the MLMC approximation of the invariant measure, whose theory was presented in Section \ref{InvaMeasAppro}.

\begin{figure}
\begin{subfigure}{.5\textwidth}
    \centering
    \includegraphics[height = 5cm]{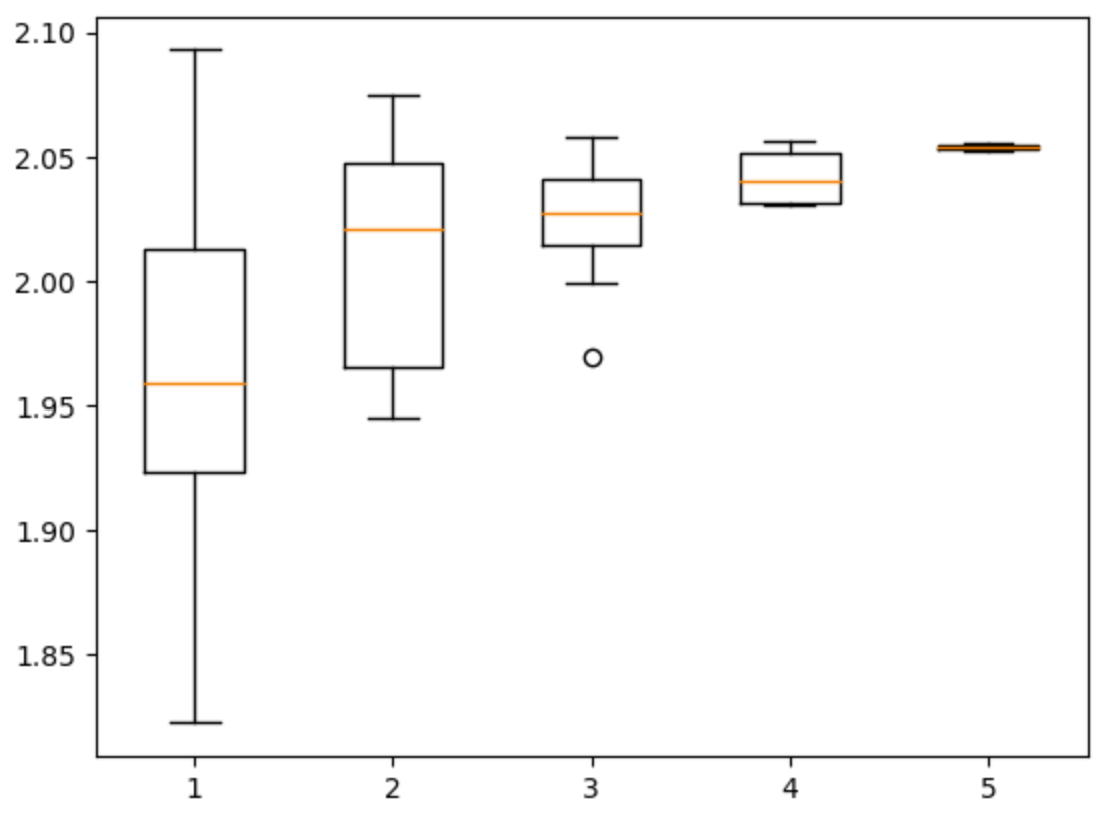}
    \caption{Relation between $\log_2 \text{Var}(\widehat{Y})$ and $\log_2 \varepsilon$ with $\varphi = \varphi_1$.}
    \label{Var-epsilon-Phi1} 
\end{subfigure}
\begin{subfigure}{.5\textwidth}
    \centering
    \includegraphics[height = 5cm]{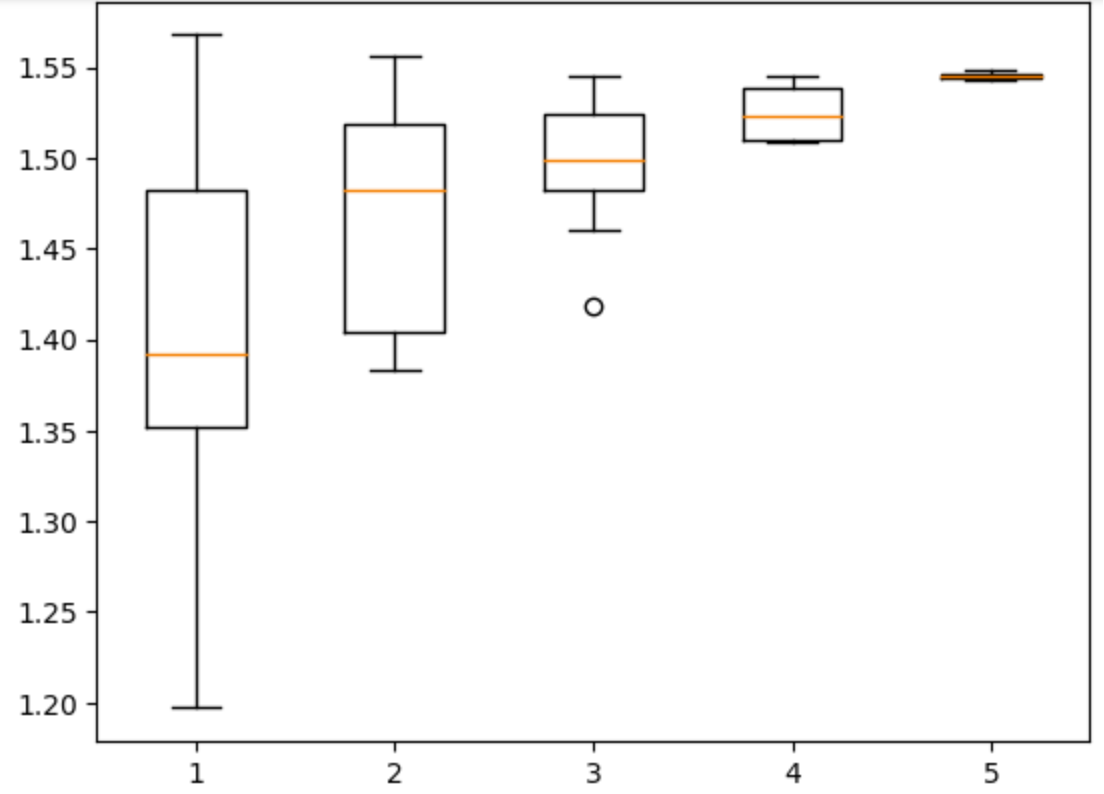}
    \caption{Relation between $\log_2 \text{Var}(\widehat{Y})$ and $\log_2 \varepsilon$ with $\varphi = \varphi_2$.}
    \label{Var-epsilon-Phi2}
\end{subfigure}
\begin{subfigure}{.5\textwidth}
    \centering
    \includegraphics[height = 5cm]{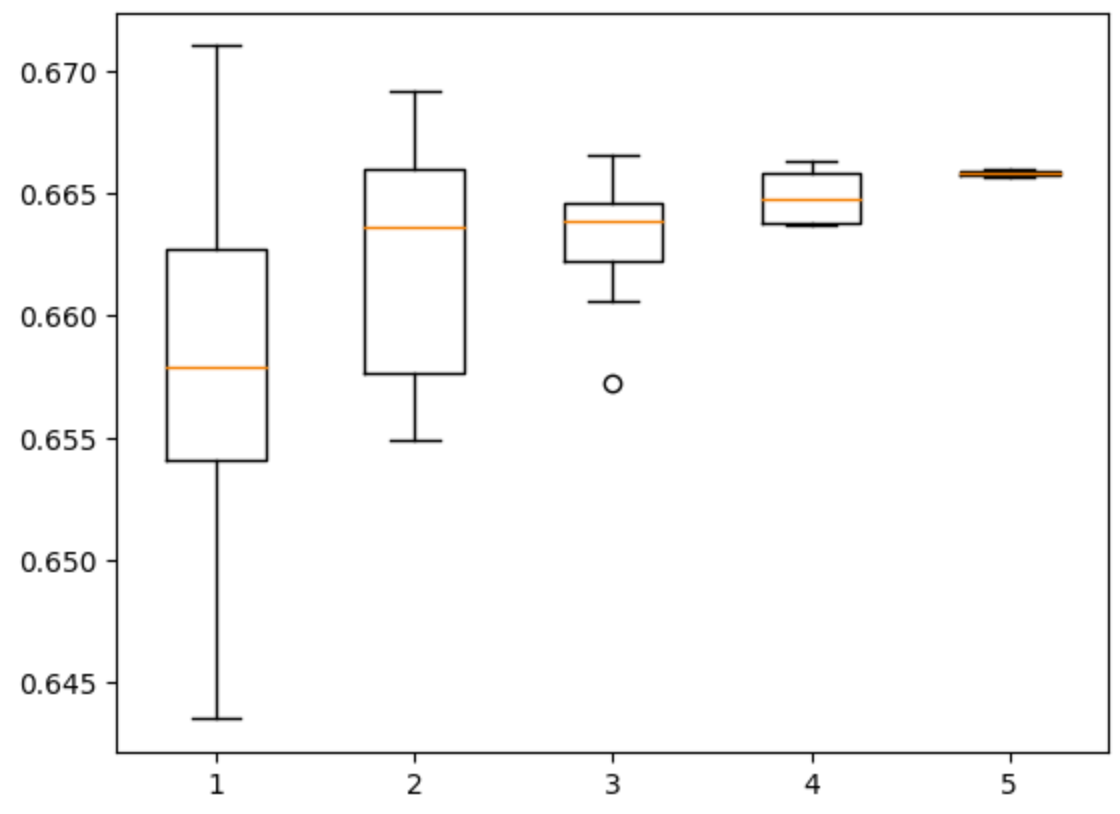}
    \caption{Relation between $\log_2 \text{Var}(\widehat{Y})$ and $\log_2 \varepsilon$ with $\varphi = \varphi_3$.}
    \label{Var-epsilon-Phi3}
\end{subfigure}
\begin{subfigure}{.5\textwidth}
    \centering
    \includegraphics[height = 5cm]{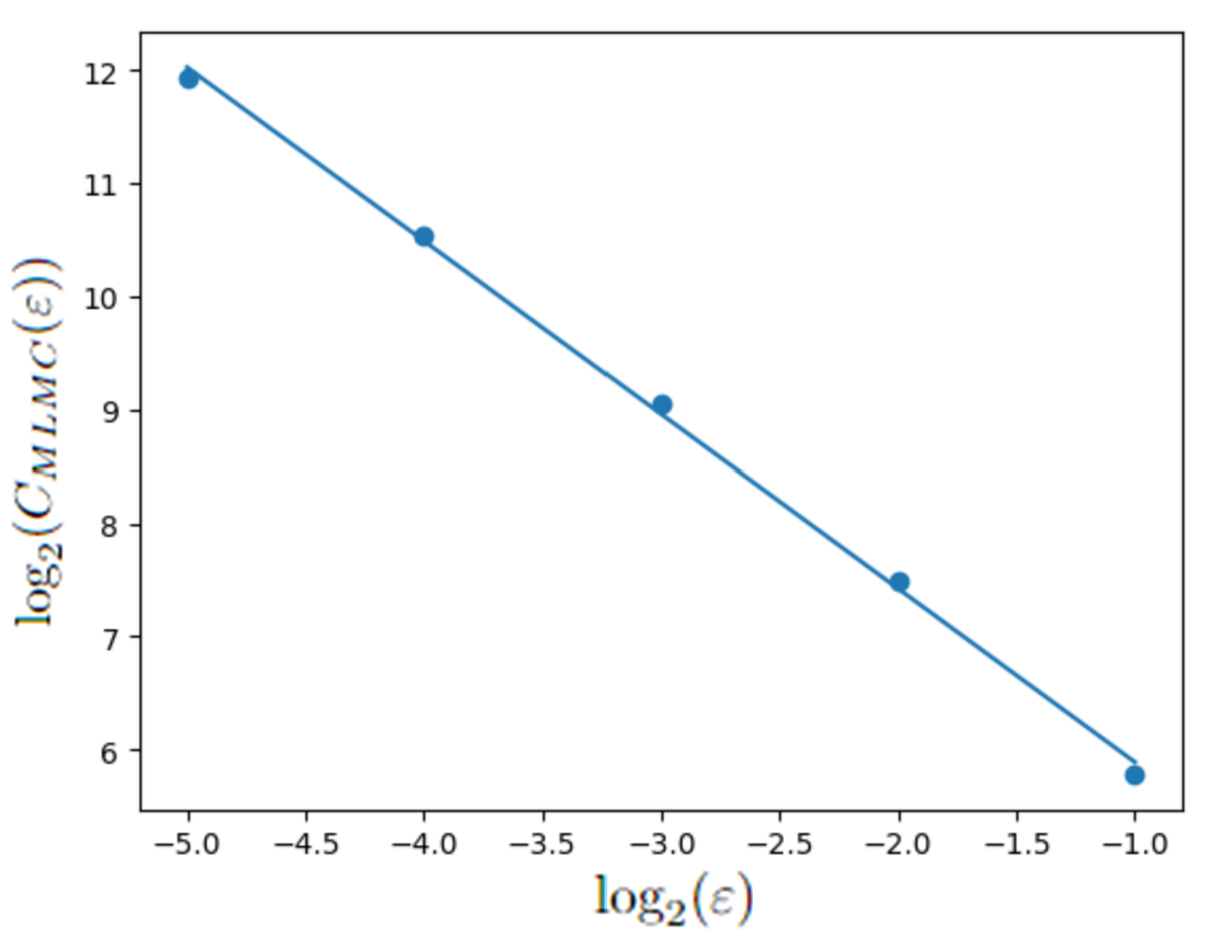}
\caption{Relation between $\log_2 C_{MLMC}(\varepsilon)$ and $\log_2 \varepsilon$.}
\label{ComputationalCost}
\end{subfigure}
\caption{Numerical results for invariant measure approximation.}
\label{SimInvaMeasAppro}
\end{figure}

\begin{table} 
\begin{center}
\begin{tabular}{c|ccc}
& Slope & y-intercept &	Coefficient of determination\\
\hline 
$\varphi = \varphi_1$	& 2.45	& -4.34 & 0.90\\
\hline 
$\varphi = \varphi_2$	& 2.39	& -3.48	& 0.90\\
\hline
$\varphi = \varphi_3$	& 2.59	& -10.53 & 0.94
\end{tabular}
\caption{Linear regression model for $\log_2 \text{Var}(\widehat{Y}) $ and $\log_2 \varepsilon$.}
\label{RegressionVariance}
\end{center}
\end{table}

In Figures \ref{Var-epsilon-Phi1}, \ref{Var-epsilon-Phi2}, \ref{Var-epsilon-Phi3}, and Table \ref{RegressionVariance}, we consider the relation between the total variance of the estimator $\widehat{Y}$ and the accuracy of the MLMC estimation $\varepsilon$. In the three figures, respectively, we draw the box-plots that represent the dispersion of the values of $\widehat{Y}$ for three different cases of the function $\varphi$, which are $\varphi = \varphi_1$, $\varphi = \varphi_2$, and $\varphi = \varphi_3$, all of which go from $\mathbb{R}^3$ to $\mathbb{R}$
\begin{align*}
    &\varphi_1(x_1, x_2, x_3) = x_1 + x_2 + x_3,\quad \varphi_2(x_1, x_2, x_3) = x_1^2 + x_2^2 + x_3^2, \quad \varphi_3(x_1, x_2, x_3) = \dfrac{|x_1 + x_2 + x_3|}{|x_1 + x_2 + x_3| + 1}.
\end{align*}
Assume that for a length of estimation interval $T_0$, number of levels $L_0$, and $(N^0_l)_{1 \leq l \leq L}$ samples for each level $l$, the approximation algorithm can attain an accuracy level $\varepsilon_0$. From this, we can choose the corresponding estimation interval, number of levels, and number of samples for each level $l$ so that the accuracy level can be attained at $\varepsilon_0$, $\varepsilon_0/2$, $\varepsilon_0/4$, $\varepsilon_0/8$, and $\varepsilon_0/16$. For these accuracy levels, we generate respectively $32$, $16$, $8$, $4$, and $2$ samples for $\widehat{Y}$ and compute the sample variances. At this point, we once again use the regression method to see the relation between $\log_2 \text{Var}(\widehat{Y}) $ and $\log_2 \varepsilon$. The slopes of the regression lines are shown in Table (\ref{RegressionVariance}), which are $2.45$, $2.39$, and $2.59$ for the three cases $\varphi = \varphi_1$, $\varphi = \varphi_2$, and $\varphi = \varphi_3$, respectively. This supports our theoretical result in Theorem \ref{MSEbound}.

Moreover, in Figure (\ref{ComputationalCost}), we plot the regression line to show how much computational cost it will take to attain a specific level of accuracy. To be more precise, for each of the four accuracy levels $\varepsilon$, $\varepsilon_0/2$, $\varepsilon_0/4$, and $\varepsilon_0/8$, we record the number of seconds it takes from the start till the end of the computation process. As shown in Theorem \ref{TheoryComputationalCost}, to achieve the accuracy of $\varepsilon$, the computational cost needed is $O(\varepsilon^{-2}|\log \varepsilon|^3)$. Indeed, the linear regression model for $\log_2 C_{MLMC}$ and $\log_2 \varepsilon$ is $y = -1.53x + 4.35$, with the coefficient of determination is $0.9985$.

\vskip 0.2cm 
\noindent 
\textbf{Acknowledgment} 

T.T. Kieu and D.T. Luong were supported by Hanoi National University of Education under Grant No. SPHN22-17. H.L. Ngo and N.K. Tran acknowledge support from the Vietnam Institute for Advanced Study in Mathematics (VIASM).

\end{document}